\newtheorem{theorem}{Theorem}[section]
\newtheorem{proposition}[theorem]{Proposition}
\newtheorem{corollary}[theorem]{Corollary}
\newtheorem{conjecture}[theorem]{Conjecture}
\newtheorem{lemma}[theorem]{Lemma}
\newtheorem{remark}[theorem]{Remark}
\newtheorem{question}[theorem]{Question}
\newcommand{\edes}{{\rm edes}}
\newcommand{\eDes}{{\rm eDes}}
\newcommand{\des}{{\rm des}}
\newcommand{\Des}{{\rm Des}}
\newcommand{\inv}{{\rm inv}}
\newcommand{\lpeak}{{\rm lpeak}}
\newcommand{\Pyr}{{\rm Pyr}}
\newcommand{\Prism}{{\rm Prism}}
\newcommand{\sd}{{\rm sd}}
\newcommand{\aA}{{\mathcal A}}
\newcommand{\bB}{{\mathcal B}}
\newcommand{\eE}{{\mathcal E}}
\newcommand{\fF}{{\mathcal F}}
\newcommand{\hH}{{\mathcal H}}
\newcommand{\lL}{{\mathcal L}}
\newcommand{\mM}{{\mathcal M}}
\newcommand{\pP}{{\mathcal P}}
\newcommand{\uU}{{\mathcal U}}
\newcommand{\zZ}{{\mathcal Z}}
\newcommand{\ba}{{\mathbf a}}
\newcommand{\bb}{{\mathbf b}}
\newcommand{\bu}{{\mathbf u}}
\newcommand{\RR}{{\mathbb R}}
\newcommand{\fS}{{\mathfrak S}}
\newcommand{\ZZ}{{\mathbb Z}}
\newcommand{\QQ}{{\mathbb Q}}
\renewcommand{\to}{\rightarrow}
\newcommand{\sm}{{\smallsetminus}}
\begin{document}
\title[Chain enumeration and real-rooted polynomials]
{Chain enumeration, partition lattices and polynomials 
with only real roots}

\author{Christos~A.~Athanasiadis}
\address{Department of Mathematics\\
National and Kapodistrian University of Athens\\
Panepistimioupolis\\ 15784 Athens, Greece}
\email{caath@math.uoa.gr}

\author{Katerina~Kalampogia-Evangelinou}
\address{Department of Mathematics\\
National and Kapodistrian University of Athens\\
Panepistimioupolis\\ 15784 Athens, Greece}
\email{kalampogia@math.uoa.gr}

\date{January 2, 2023}
\thanks{ \textit{Mathematics Subject 
Classifications}: 05A05, 05A18, 05E45, 06A07, 26C10}
\thanks{ \textit{Key words and phrases}. 
Chain polynomial, geometric lattice, partition 
lattice, real-rooted polynomial, flag $h$-vector, 
convex polytope, barycentric subdivision.}

\begin{abstract}
The coefficients of the chain polynomial of a finite 
poset enumerate chains in the poset by their number 
of elements. The chain polynomials of the partition 
lattices and their standard type $B$ analogues are 
shown to have only real roots. The real-rootedness 
of the chain polynomial is conjectured for all 
geometric lattices and is shown to be preserved by 
the pyramid and the prism operations on 
Cohen--Macaulay posets. As a result, new 
families of convex polytopes whose barycentric 
subdivisions have real-rooted $f$-polynomials are 
presented. An application to the face enumeration 
of the second barycentric subdivision of the 
boundary complex of the simplex is also included. 
\end{abstract}

\maketitle

\section{Introduction}
\label{sec:intro}
 
The chain polynomial of a finite poset (partially 
ordered set) $\lL$ is defined as $p_\lL(x) := 
\sum_{k \ge 0} c_k(\lL) x^k$, where $c_k(\lL)$ 
stands for the number of $k$-element chains in 
$\lL$ (thus, $p_\lL(x)$ is the $f$-polynomial of 
the order complex of $\lL$). The general question 
which motivates this paper is as follows.
\begin{question} \label{que:main}
For which finite posets does the chain polynomial 
have only real roots?  
\end{question}
This question has been studied and proven to be 
very interesting and challenging for specific classes 
of posets. For finite distributive lattices, it is 
known to be equivalent to the poset conjecture 
for natural labelings, posed in the seventies by 
Neggers~\cite{Ne78} (see also 
\cite[Conjecture~1]{Sta89}) and finally disproved 
by Stembridge~\cite{Ste07}, after counterexamples 
to a more general conjecture were found by 
Br\"and\'en~\cite{Bra04}. For face lattices of 
convex polytopes, it was raised by Brenti and 
Welker in their study of $f$-vectors of barycentric 
subdivisions~\cite{BW08}. The question is currently 
open in this case and is known to have an affirmative 
answer for face lattices of simplicial (equivalently, 
simple) polytopes~\cite{BW08}, cubical 
polytopes~\cite{Ath20b+} and, by the results 
of~\cite{Ga05}, polytopes of dimension at most five. 
Another notable result~\cite[Corollary~2.9]{Sta98} 
asserts that the chain polynomial $p_\lL(x)$ has
only real roots for every poset $\lL$ which does
not contain the disjoint union of a three-element
chain and a one-element chain as an induced 
subposet. 

This paper partly aims to show that 
Question~\ref{que:main} is very interesting for 
other classes of posets as well, especially for  
geometric lattices (the lattices of flats of matroids). 
The following statement is the main conjecture posed 
in this paper (a more precise conjecture appears in 
Section~\ref{sec:gg}).
\begin{conjecture} 
The chain polynomial $p_\lL(x)$ has only real roots 
for every geometric lattice $\lL$. 
\label{conj:main}
\end{conjecture}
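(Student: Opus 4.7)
The plan is to exploit the EL-shellability of geometric lattices. By a theorem of Bj\"orner, any linear ordering of the atoms of a geometric lattice $\lL$ of rank $r$ induces an EL-labeling, so the order complex $\Delta(\bar\lL)$ of $\bar\lL := \lL \sm \{\hat 0, \hat 1\}$ is shellable (hence Cohen--Macaulay) and its $h$-polynomial $h_{\bar\lL}(x)$ enumerates the maximal chains of $\bar\lL$ by the descent number of the induced label sequence. Since $p_\lL(x) = (1+x)^2 f_{\Delta(\bar\lL)}(x)$ and the $f$-polynomial of a shellable complex of dimension $r-2$ decomposes as $f_{\Delta(\bar\lL)}(x) = \sum_i h_i \, x^i (1+x)^{r-1-i}$, the change of variable $x \mapsto x/(1+x)$ shows that real-rootedness of $h_{\bar\lL}(x)$ would at once imply real-rootedness of $p_\lL(x)$. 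The target of the attack is therefore the $h$-polynomial.

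The second step is to identify this descent enumerator with a known real-rooted polynomial, or to embed it into an interlacing family that supports an induction on rank. For the partition lattice, $h_{\bar\lL}(x)$ specializes, up to normalization, to the Eulerian polynomial, whose real-rootedness is classical; one hopes that for a general matroid it decomposes as a sum, indexed by atoms or coatoms $F$, of products $h_{[\hat 0, F]}(x) \cdot h_{[F, \hat 1]}(x)$ weighted by a polynomial reflecting the descent at the joining rank. Because $[\hat 0, F]$ (a product of geometric lattices coming from a direct sum of matroid minors) and $[F, \hat 1]$ (again geometric) both have lower rank, the inductive hypothesis applies to each factor; the delicate point is to verify that these summands form a suitably interlacing family and that the weights preserve real-rootedness. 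As a complementary tactic one could treat subclasses first: uniform matroids, where the chain polynomial admits an explicit hypergeometric formula; representable matroids over fields of characteristic zero, where Hodge-theoretic tools in the spirit of Adiprasito--Huh--Katz might apply; and classes built from the pyramid and prism operations shown in this paper to preserve real-rootedness on Cohen--Macaulay posets.

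The principal obstacle is the vastness of the class of matroids and the absence of a uniform algebraic structure on chain polynomials. Log-concavity of characteristic polynomials (Heron--Rota--Welsh) required the development of combinatorial Hodge theory, but chain polynomials, unlike characteristic polynomials, have no clean realization as Chern classes or as a Hilbert series of a canonical ring, so a parallel machinery is not in sight. I expect the genuine difficulty to lie in constructing a combinatorial model for the descent statistic on maximal chains that is compatible with matroid contractions and restrictions, so that the rank induction goes through uniformly; once such a model is in hand, verifying real-rootedness should be a comparatively routine analytic step through interlacing or gamma-positivity arguments.
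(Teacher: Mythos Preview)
The statement you are attempting to prove is labeled a \emph{Conjecture} in the paper, and the paper does not claim to prove it; it establishes only the special cases in Theorem~\ref{thm:mainA} (subspace lattices, partition lattices of types $A$ and $B$, near-pencils, uniform matroids) and records computational verification for at most nine atoms. Consequently there is no ``paper's own proof'' against which to compare.

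Your proposal is not a proof either, and you acknowledge this yourself in the final paragraph. The reduction in your first paragraph --- from $p_\lL(x)$ to $h_{\bar\lL}(x)$ via the change of variable $x \mapsto x/(1-x)$ --- is correct and entirely standard (it is the content of the paper's preliminaries, Section~\ref{sec:pre}). But from that point on you offer only a sketch of possible attacks: a coatom decomposition whose interlacing properties you explicitly flag as ``the delicate point'' without any argument, and a list of subclasses to try first. That is a reasonable research programme, not a proof.

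There is also a factual slip. You write that for the partition lattice $h_{\bar\lL}(x)$ ``specializes, up to normalization, to the Eulerian polynomial.'' It does not: the Eulerian polynomial $A_n(x)$ is $h_{\lL_n}(x)$ for the \emph{Boolean} lattice, whereas $h_{\Pi_n}(x)$ is genuinely different (for $n=4$ it equals $1 + 11x + 6x^2$, not $A_3(x) = 1 + 4x + x^2$). Indeed, one of the contributions of the paper is precisely to give a new combinatorial interpretation of $h_{\Pi_n}(x)$ (Proposition~\ref{prop:betaPi}) as a descent enumerator over the multiset $\aA_n$, and to prove its real-rootedness from that interpretation via an interlacing recursion --- a nontrivial argument that your proposal presumes away. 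The paper's method for each special case depends on lattice-specific structure (explicit EL-labelings, explicit descent models), and the authors remark that these ``do not seem to extend easily to other geometric lattices.'' Your proposed uniform induction over coatoms would need to overcome exactly this obstacle, and nothing in the proposal indicates how.
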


Our first main result verifies this conjecture for
some important geometric lattices, such as the subspace 
lattice $\lL_n(q)$ of all linear subspaces of an 
$n$-dimensional vector space over the field with $q$ 
elements, the partition lattice $\Pi_n$ 
\cite[Section~3.1]{StaEC1} and its standard type $B$ 
analogue $\Pi^B_n$ \cite[Section~1.3]{Wa07}. The 
statement about uniform matroids follows from the 
main result of~\cite{BW08} and is included for the 
sake of completeness. 
\begin{theorem} \label{thm:mainA} 
Conjecture~\ref{conj:main} is true for
\begin{itemize}
\itemsep=0pt
\item[(a)]
the subspace lattices $\lL_n(q)$,

\item[(b)]
the partition lattices $\Pi_n$ and $\Pi^B_n$,

\item[(c)]
the lattices of flats of near-pencils and uniform 
matroids.
\end{itemize}
\end{theorem}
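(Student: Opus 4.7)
The plan is to handle parts (a), (b), (c) separately, combining explicit enumeration formulas for $k$-chains in each lattice with classical or recent real-rootedness results. The common pattern is to first express $p_\lL(x)$ as a concrete sum indexed by a natural parameter (a dimension vector, a sequence of refinements, or a subset of the ground set), and then either identify the result with a known real-rooted family or mount an inductive interlacing argument.

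For part (a), a $k$-chain in $\lL_n(q)$ is determined by a strict dimension vector $0 < d_1 < \cdots < d_k < n$ together with a flag realizing it, and the number of flags with a prescribed dimension vector is a product of Gaussian binomials. Summing over $k$ displays $p_{\lL_n(q)}(x)$ as a polynomial whose coefficients are $q$-multinomials; the aim is then to identify this with a $q$-analog of the chain polynomial of the Boolean lattice $B_n$, the latter being (a shift of) the $f$-polynomial of the barycentric subdivision of the simplex boundary and hence real-rooted by the Brenti--Welker theorem cited in the introduction. Part (b) follows the same blueprint: $k$-chains in $\Pi_n$ correspond to strict chains of set partitions refining one another, and the parallel scheme on signed set partitions is meant to handle $\Pi^B_n$. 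One can then try to recognize $p_{\Pi_n}(x)$, via its exponential generating function, as a member of a well-studied real-rooted family (such as the derangement or Eulerian polynomials), or run an induction on $n$ by inserting a new singleton and tracking its effect on each chain.

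Part (c) is the most direct. For the uniform matroid $U_{k,n}$ the lattice of flats is the rank-$k$ truncation of $B_n$; the chain polynomial then splits into pieces reducing to chain polynomials of Boolean lattices or of their skeleta --- that is, to $f$-polynomials of barycentric subdivisions of skeleta of the simplex --- which are real-rooted by Brenti--Welker. Near-pencils have rank $3$ with a transparent flat structure (one long line through all but one of the atoms, together with two-element lines meeting at the special atom), so an explicit small-degree polynomial emerges whose real-rootedness can be verified by elementary means, e.g.\ by factoring it over the rationals or checking the discriminant.

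The hard part will be (a) and especially (b): neither $p_{\lL_n(q)}(x)$ nor $p_{\Pi_n}(x)$ visibly factors or matches a canonical real-rooted template, so the inductive interlacing argument --- via the recursion on $n$ coming from inserting a new element (for $\Pi_n$) or from quotienting by a hyperplane or a line (for $\lL_n(q)$) --- will require choosing a companion polynomial whose interlacing with the chain polynomial at stage $n$ is preserved at stage $n+1$. For the type-$B$ lattice $\Pi^B_n$ a signed analog of this recursion is needed, and verifying that the insertion-with-sign step still preserves interlacing is the subtlest check; I expect this to be where the actual novelty of the proof lives.
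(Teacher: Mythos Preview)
This is a plan rather than a proof, and it contains one concrete error and one genuine missing idea.

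The concrete error is in part (c): near-pencils are \emph{not} rank-$3$ matroids. The near-pencil of rank $n$ on $m$ elements is obtained from the rank-$2$ uniform matroid on $m-n+2$ elements by adding $n-2$ coloops, so its rank is an arbitrary $n \ge 2$. Your ``explicit small-degree polynomial'' approach therefore does not apply. The paper handles near-pencils by first proving that the pyramid operation $\Pyr(\lL) = \lL \times \lL_1$ preserves real-rootedness of $h_\lL(x)$ for Cohen--Macaulay $\lL$ (via a derivative identity and Rolle's theorem), and then observing that adding a coloop corresponds exactly to taking a pyramid. Your treatment of uniform matroids, on the other hand, is essentially correct and matches the paper's.

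The missing idea is in part (b). Your hope to ``recognize $p_{\Pi_n}(x)$, via its exponential generating function, as a member of a well-studied real-rooted family'' will not pan out: these polynomials are not Eulerian, derangement, or any standard named family. The paper's actual mechanism is to work with $h_{\Pi_n}(x)$ rather than $p_{\Pi_n}(x)$, and to produce a new combinatorial interpretation of the flag $h$-vector of $\Pi_n$ via Gessel's strict $R$-labeling. This yields $h_{\Pi_n}(x) = \sum_{\sigma \in \aA_n} x^{\des(\sigma)}$, where $\aA_n$ is a specific multiset of words, and the interlacing recursion then runs on the refinement $h_{n,k}(x)$ indexed by the last letter of $\sigma$ --- not by ``inserting a new singleton'' into a partition. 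Without this (or an equivalent) explicit descent-style model, there is no handle on which to hang the interlacing induction; your proposal acknowledges this is ``where the actual novelty lives'' but does not supply it. Part (a) is closer to workable: the paper does exactly identify $h_{\lL_n(q)}(x)$ with the $q$-Eulerian polynomial $A_n(x;q)$ and cites or reproves its real-rootedness via the same last-letter refinement, which is the precise form your ``inductive interlacing'' should take.
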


Conjecture~\ref{conj:main} has also been verified 
computationally for all geometric lattices with at 
most nine atoms.

Our second main result gives constructions
of posets which preserve the real-rootedness of the 
chain polynomial. More specifically, there are 
natural operations $\Pyr$ and $\Prism$ on posets
(called pyramid and prism, see Section~\ref{sec:pre}), 
such that if $\lL$ is the face lattice of a convex 
polytope $\pP$, then 
$\Pyr(\lL)$ and $\Prism(\lL)$ are the face lattices 
of the pyramid and the prism over $\pP$, 
respectively. Among other applications, the 
following statement implies the existence of large 
families of nonsimplicial, nonsimple and noncubical 
polytopes in any dimension, the face lattices of 
which have real-rooted chain polynomials.

\begin{theorem} \label{thm:mainB} 
If the chain polynomial of a bounded Cohen--Macaulay 
poset $\lL$ has only real roots, then so do the chain
polynomials of the pyramid and the prism over $\lL$. 
\end{theorem}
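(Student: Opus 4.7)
The plan is to derive explicit formulas for $p_{\Pyr(\lL)}(x)$ and $p_{\Prism(\lL)}(x)$ as polynomials in $p_\lL(x)$ and $p'_\lL(x)$, and then to recognize these, up to harmless real-rooted factors, as derivatives of polynomials already known to have only real roots. First I would exploit the product-like structure of these operations: $\Pyr(\lL)$ is isomorphic to $\lL \times \mathbf{2}$ as a bounded poset, while $\Prism(\lL)$ admits an analogous description modeled on the face lattice of the interval $I$. Classifying chains in each of these posets by the pattern of second coordinates, a routine case analysis yields
\begin{align*}
p_{\Pyr(\lL)}(x) &\;=\; p_\lL(x) + x(1+x)\, p'_\lL(x), \\
p_{\Prism(\lL)}(x) &\;=\; (1-2x)\, p_\lL(x) + 2x(1+x)\, p'_\lL(x).
\end{align*}

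Next I would use the fact that every chain of a bounded poset decomposes uniquely by its intersection with $\{\hat{0}, \hat{1}\}$, giving $p_\lL(x) = (1+x)^2\, p_{\bar\lL}(x)$, where $\bar\lL$ denotes the proper part; in particular $p_{\bar\lL}(x)$ is real-rooted whenever $p_\lL(x)$ is. Substituting this factorization into the formulas above and simplifying reveals the key identities
\[
p_{\Pyr(\lL)}(x) \;=\; (1+x)^2 \cdot \bigl[\, x(1+x)\, p_{\bar\lL}(x) \,\bigr]',
\]
\[
p_{\Prism(\lL)}(x) \cdot p_{\bar\lL}(x) \;=\; (1+x)^2 \cdot \bigl[\, x(1+x)\, p_{\bar\lL}(x)^2 \,\bigr]',
\]
the second of which comes from the product-rule observation $(1+2x)\,p + 2x(1+x)\,p' = [x(1+x)\, p^2]' / p$ applied to $p = p_{\bar\lL}$.

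With these identities in hand, the conclusion is short. Since $p_{\bar\lL}(x)$ has only real roots by hypothesis, so do $x(1+x)\, p_{\bar\lL}(x)$ and $x(1+x)\, p_{\bar\lL}(x)^2$, and hence so do their derivatives by Rolle's theorem. For the pyramid, multiplying by the real-rooted factor $(1+x)^2$ preserves real-rootedness. For the prism, real-rootedness of the right-hand side forces real-rootedness of the left-hand side, and since the multiset of roots of a product of polynomials equals the union of the multisets of roots of its factors, $p_{\Prism(\lL)}(x)$ inherits only real roots. The main obstacle I expect is the combinatorial derivation of the initial chain-counting formulas; the subsequent real-rootedness argument is a short computation with derivatives, and the Cohen--Macaulay hypothesis is not used explicitly in this argument, entering only indirectly through ensuring that $\Pyr(\lL)$ and $\Prism(\lL)$ themselves remain Cohen--Macaulay.
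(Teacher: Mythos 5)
Your proposal is correct. I verified both of your chain-counting formulas (they do follow from classifying chains of the product by the pattern of second coordinates, and they check out on small cases such as the pyramid and prism over a segment), both displayed derivative identities are algebraically exact, and the final step — the derivative of a real-rooted polynomial is real-rooted, and a product of nonzero polynomials is real-rooted only if each factor is — is sound. Your route is the same mechanism as the paper's but in different coordinates: the paper works with $h_{\lL}(x)$ rather than $p_{\lL}(x)$, obtains the analogues of your first two formulas by specializing the Ehrenborg--Readdy coproduct formulas for the $\ba\bb$-index at $\ba=1$, $\bb=x$, and then rewrites them as $h_{\Pyr(\lL)}(x)/(1-x)^{n+2} = \frac{{\rm d}}{{\rm d}x}\bigl(x h_\lL(x)/(1-x)^{n+1}\bigr)$ and $h_\lL(x)h_{\Prism(\lL)}(x)/(1-x)^{2n+3} = \frac{{\rm d}}{{\rm d}x}\bigl(x h_\lL(x)^2/(1-x)^{2n+2}\bigr)$; under the substitution $h(x)=(1-x)^{d}\,p\!\left(\frac{x}{1-x}\right)$ these are precisely your identities, with your factors of $(1+x)$ playing the role of the powers of $(1-x)$. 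The two genuine differences are worth noting. First, you derive the input formulas by direct chain enumeration instead of via the $\ba\bb$-index, which removes the gradedness assumption present in the paper's Theorem~\ref{thm:pp}(a). Second, and more substantially, your identities are polynomial identities with no poles, so Rolle applies unconditionally; the paper instead must invoke the Cohen--Macaulay hypothesis to guarantee that $h_\lL(x)$ has nonnegative coefficients, hence only negative roots, so that the roots stay away from the pole at $x=1$ of the rational functions being differentiated and the limit at $-\infty$ can be used to count roots. Your closing remark is therefore accurate: the Cohen--Macaulay hypothesis is not needed at all in your argument, which proves the statement for arbitrary bounded posets. The paper's extra bookkeeping does buy the additional conclusion (its part (b)) that $h_\lL(x)$ interlaces the two output polynomials, but your Rolle step yields the corresponding interlacing statement for the $p$-polynomials just as easily if you wanted it.
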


The content, methods and structure of this paper 
may be described as follows. Section~\ref{sec:pre}
provides basic definitions and useful background 
from algebraic, enumerative and geometric 
combinatorics (mainly on chain enumeration in 
posets) and the theory of real-rooted polynomials.
Sections~\ref{sec:subspace} and~\ref{sec:Pns} prove 
parts (a) and (b) of Theorem~\ref{thm:mainA}, 
respectively. The proofs depend on specific 
combinatorial features of these posets and do not 
seem to extend easily to other geometric lattices. 
They proceed by exploiting explicit combinatorial 
interpretations of the flag $h$-vectors and the 
$h$-polynomials of the order complexes of the posets 
in question. To the best of our knowledge, such 
combinatorial interpretations have not 
appeared in the literature before for the partition 
lattices $\Pi_n$ and $\Pi^B_n$. Part (a) of 
Theorem~\ref{thm:mainA} is easier to prove and 
serves as an introductory example.

Section~\ref{sec:gg} discusses Question~\ref{que:main} 
for some general classes of Cohen--Macaulay posets, 
including that of geometric lattices, proves 
Theorem~\ref{thm:mainB} and deduces from that part 
(c) of Theorem~\ref{thm:mainA} (see 
Proposition~\ref{prop:pencil-uniform}). 
Proposition~\ref{prop:sd(Z)} suggests that the 
real-rootedness of chain polynomials of geometric 
lattices should perhaps be studied in connection to 
that of chain polynomials of face lattices of 
zonotopes and oriented matroids. The proofs in 
Section~\ref{sec:gg} rely heavily on results of 
Ehrenborg and~Readdy~\cite{ER98} on the chain 
enumeration of pyramids and prisms and of Billera, 
Ehrenborg and~Readdy~\cite{BER97} on the chain 
enumeration of big face lattices of oriented 
matroids (reviewed in Section~\ref{sec:pre}). 
Section~\ref{sec:bary} applies 
Proposition~\ref{prop:sd(Z)} to give an unexpected 
combinatorial interpretation of the $h$-polynomial 
of the second barycentric subdivision of the 
boundary complex of a simplex 
and of its associated $\gamma$-polynomial, thus 
solving a problem posed in~\cite{Ath18}. 

As noted already, the chain polynomial $p_\lL(x)$ 
coincides with the $f$-polynomial of the order 
complex $\Delta(\lL)$ of a poset $\lL$. The 
results of Sections~\ref{sec:subspace}, \ref{sec:Pns} 
and~\ref{sec:gg} are phrased in terms of the 
$h$-polynomial of $\Delta(\lL)$ instead, which is 
more natural from an algebraic combinatorics point 
of view for the classes of posets we are interested 
in.

\section{Preliminaries} 
\label{sec:pre}

This section includes preliminaries on notation 
and definitions and discusses some of the key tools 
and results from chain enumeration and the theory of 
real-rooted polynomials which will be used in this 
paper. We assume familiarity with main objects of 
study in algebraic, enumerative and geometric 
combinatorics, such as posets, matroids, simplicial 
complexes, convex polytopes and oriented matroids; 
standard references are \cite{Bj92, OM, Ox, StaCCA, 
StaEC1, Wa07, Zi95}. Any undefined terminology can 
be found there. 

We will denote by $\fS_n$ the 
symmetric group of permutations of $[n] := 
\{1, 2,\dots,n\}$ and by $|S|$ the cardinality of 
a finite set $S$. 

\subsection{Simplicial complexes and face enumeration}

Given an $(n-1)$-dimensional (finite, abstract) 
simplicial complex $\Delta$, the 
\emph{$f$-polynomial} and the \emph{$h$-polynomial} 
are defined as
\begin{eqnarray*}
f(\Delta, x) & = & \sum_{i=0}^n f_{i-1}(\Delta) x^i 
\\ h(\Delta, x) & = & 
(1-x)^n f(\Delta, \frac{x}{1-x}) \ = \ 
\sum_{i=0}^n f_{i-1} (\Delta) \, x^i (1-x)^{n-i} ,  
\end{eqnarray*}
where $f_{i-1}(\Delta)$ stands for the number of 
$(i-1)$-dimensional faces of $\Delta$. The 
$h$-polynomial has nonnegative coefficients for all
Cohen--Macaulay simplicial complexes 
\cite[Section~II.3]{StaCCA} (this 
property is meant to be considered here over the 
field $\QQ$ of rational numbers) and, in particular, 
for all simplicial complexes of interest in this 
paper. Morever, $h(\Delta, x)$ can be considered as 
an $x$-analogue of $f_{n-1}(\Delta)$, to which its 
coefficients sum up. 
Since $f(\Delta, x)$ has only real roots if 
and only if so does $h(\Delta, x)$, the former can 
be replaced by the latter as far as
real-rootedness is concerned.

\subsection{Order complexes and chain enumeration}

We will be mostly interested in order complexes of
posets. The \emph{order complex} of a finite poset 
$(\lL, \preceq)$ is defined \cite[Section~3.8]{StaEC1} 
as the simplicial complex $\Delta(\lL)$ which consists 
of all chains in $\lL$. We have $f(\Delta(\lL), x) 
= p_\lL(x)$, where $p_\lL(x)$ is the chain polynomial 
defined in the introduction. To simplify notation, 
we set $h_\lL(x) = h(\Delta(\lL), x)$ throughout this 
paper. Thus, $p_\lL(x)$ and $h_\lL(x)$ are related 
by the equation
\[ h_\lL(x) \ = \ (1-x)^n \, p_\lL \left( \frac{x}{1-x}
   \right) , \]
where $n$ is the largest cardinality of a 
chain in $\lL$. The polynomial $h_\lL(x)$ is an  
$x$-analogue of the number of $n$-element chains of 
$\lL$, which are exactly the $(n-1)$-dimensional faces 
of $\Delta(\lL)$. 
  
Suppose now that $\lL$ has a minimum element $\hat{0}$,
a maximum element $\hat{1}$ and that it is graded of 
rank $n$, say with rank function $\rho: P \to \{0, 
1,\dots,n\}$. Thus, all maximal chains in $\lL$ have 
exactly $n+1$ elements. Following 
\cite[Section~3.13]{StaEC1}, for $S \subseteq [n-1]$ we 
denote by $\alpha_\lL(S)$ the number of maximal chains 
of the subposet $\{ t \in \lL: \rho(t) \in S \} \cup 
\{\hat{0}, \hat{1}\}$ of $\lL$ and set 
\[ \beta_\lL(S) \ = \ \sum_{T \subseteq S} \, 
   (-1)^{|S-T|} \, \alpha_\lL(T). \]
The numbers $\beta_\lL(S)$ for $S \subseteq [n-1]$ 
are the entries of the \emph{flag $h$-vector} of $\lL$. 
They are nonnegative if $\lL$ is Cohen--Macaulay (see, 
for instance, \cite[Theorem~4.4]{StaCCA}) and 
refine the coefficients of $h_\lL(x)$, in the sense 
that
\begin{equation} \label{eq:hL(x)}
h_\lL(x) \ = \sum_{S \subseteq [n-1]} \beta_\lL(S) 
x^{|S|}.
\end{equation}
We also set $\overline{\lL} = \lL \sm \{ \hat{0}, 
\hat{1} \}$ and recall that $h_\lL(x) = 
h_{\lL \sm \{ \hat{0} \}}(x) = 
h_{\lL \sm \{ \hat{1} \}}(x) = h_{\overline{\lL}}(x)$. 

The theory of edge labelings can provide useful 
combinatorial interpretations of the numbers 
$\beta_\lL(S)$. We denote by $\eE(\lL)$ and 
$\mM(\lL)$ the set of covering relations and maximal 
chains of $\lL$, respectively. An edge labeling of 
$\lL$ for us will be a map $\lambda: \eE(\lL) \to 
\ZZ$. Given a maximal chain $C: \hat{0} = c_0 \prec 
c_1 \prec \cdots \prec c_n = \hat{1}$ of $\lL$, such 
a map induces the sequence of labels 
\[ \lambda(C) \ = \ (\lambda(c_0,c_1), 
   \lambda(c_1,c_2),\dots,\lambda(c_{n-1},c_n)) . \]

\smallskip
\noindent
We denote by $\Des_\lambda(C)$ the set of indices $i 
\in [n-1]$ for which $\lambda(c_{i-1},c_i) \ge 
\lambda(c_i,c_{i+1})$ and call $C$ \emph{strictly 
increasing} with respect to $\lambda$ if no such 
index exists. By restricting $\lambda$, all these 
definitions apply to the closed intervals in $\lL$. 
We say that $\lambda$ is a \emph{strict $R$-labeling} 
if every closed interval in $\lL$ has a unique strictly
increasing maximal chain with respect to $\lambda$. 
Under this assumption on $\lambda$, for every $S 
\subseteq [n-1]$, $\beta_\lL(S)$ is equal to the 
number of maximal chains $C \in \mM(\lL)$ such that
$\Des_\lambda(C) = S$; see \cite[Section~3.14]{StaEC1} 
\cite[Section~3.2]{Wa07} for examples and more 
information on $R$-labelings.

The flag $h$-vector of $\lL$ is nicely encoded by the 
$\ba \bb$-index $\Psi_\lL$. Given noncommuting 
variables $\ba$ and $\bb$, this may be defined as
\[ \Psi_\lL \ = \ \Psi_\lL(\ba,\bb) \ = 
   \sum_{S \subseteq [n-1]} \beta_\lL(S) \bu_S , \]
where $\bu_S = u_1 u_2 \cdots u_{n-1}$, with 
\[ u_i \ = \ \begin{cases}
   \bb, & \text{if $i \in S$} \\
   \ba, & \text{if $i \not\in S$}, \end{cases} \]
is the $\ba \bb$-monomial associated to $S$ in the 
standard way. We note that Equation~(\ref{eq:hL(x)}) 
may be rewritten as $h_\lL(x) = \Psi_\lL(1,x)$.

\subsection{Face lattices of polytopes}

We will denote by $\fF(\pP)$ the face lattice of a 
polytope $\pP$. The \emph{barycentric subdivisions} 
$\sd(\pP)$ and $\sd(\partial \pP)$ of $\pP$ and its
boundary complex $\partial \pP$ are defined as the 
order complexes $\Delta(\lL \sm \{ \hat{0} \})$ 
and $\Delta(\overline{\lL})$, respectively, where 
$\lL = \fF(\pP)$. In particular, the $f$-polynomials 
of $\sd(\pP)$ and $\sd(\partial \pP)$ are the chain 
polynomials of $\lL \sm \{ \hat{0} \}$ and 
$\overline{\lL}$ and their real-rootedness is 
equivalent to that of $h_\lL(x) 
= h_{\lL \sm \{ \hat{0} \}}(x) = h(\sd(\pP),x) = 
h_{\overline{\lL}} (x) = h(\sd(\partial \pP),x)$.

There is another lattice associated 
to any zonotope $\zZ$, namely the geometric lattice 
of flats of the matroid defined by the generators 
of $\zZ$. This lattice, say $\lL(\zZ)$, is isomorphic 
to the intersection lattice of the linear hyperplane 
arrangement $\hH_\zZ$ corresponding to $\zZ$; the face
poset $\fF(\zZ) \sm \{\hat{0}\}$ is anti-isomorphic 
to the face poset of $\hH_\zZ$ \cite[Section~7.3]{Zi95}. 
The main result of~\cite{BER97} expresses the flag 
$h$-vector of $\fF(\zZ)$ in terms of that of $\lL(\zZ)$.
Consider the linear function $\omega: \ZZ \langle 
\ba, \bb \rangle \to \ZZ \langle \ba, \bb \rangle$ 
defined as follows: if $v$ is any $\ba \bb$-word, 
then $\omega(v)$ is obtained from $v$ by first 
replacing each occurrence of $\ba \bb$ with 
$2(\ba \bb + \bb \ba)$ and then replacing each of 
the remaining letters with $\ba + \bb$. 

\begin{theorem} {\rm (\cite[Corollary~3.2]{BER97})}
We have $\Psi_{\fF(\zZ)} = \omega (\ba \cdot 
\Psi_\lL)$ for every zonotope $\zZ$, where $\lL$ is
the lattice of flats of the matroid associated to $\zZ$. 
\label{thm:BER} 
\end{theorem}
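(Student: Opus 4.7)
The proof strategy rests on the oriented-matroid interpretation of $\fF(\zZ)$. Faces of $\zZ$ correspond to covectors (sign vectors in $\{+,-,0\}^E$, where $E$ indexes the generators of $\zZ$), and the zero-set $z(\sigma)$ of a covector is a flat of the underlying matroid. Thus there is a rank-preserving surjection from $\fF(\zZ)\sm\{\hat 0\}$ onto $\lL$ (up to an off-by-one from the anti-isomorphism with the arrangement face poset), sending a face to the smallest flat spanned by the hyperplanes containing it. My plan would be to choose an R-labeling of $\fF(\zZ)$ that is compatible with this surjection and then count maximal chains refining each fixed flag in $\lL$.

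First I would pick an R-labeling $\lambda$ of $\fF(\zZ)$ whose labels encode both a matroid atom (determining how the zero-set shrinks) and a sign in $\{+,-\}$ (determining which side of the corresponding hyperplane the new face lies on), together with an auxiliary R-labeling $\mu$ of $\lL$ (e.g.\ the standard atom-ordering labeling) so that a maximal chain $C$ in $\fF(\zZ)$ projects to a chain of flats that can be completed to a maximal chain $\pi(C)$ in $\lL$ with $\mu$-labels compatible with the atom-parts of $\lambda(C)$. Using Theorem~\ref{thm:BER}-style enumeration, this reduces the problem to summing, over every maximal chain $D$ of $\lL$ and every $S\subseteq[n-1]$, the number of lifts $C$ of $D$ with $\Des_\lambda(C)=S$.

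Second I would interpret the $\ba\bb$-word $\bu_S$ (for the flag coded by $D$) term by term. The leading $\ba$ in $\ba\cdot\Psi_\lL$ accounts for the extra bottom covering of $\fF(\zZ)$ (from $\hat 0$ to a vertex), which has no counterpart in $\lL$. At each position where the $\mu$-label of $D$ is an ascent (letter $\ba$) and is \emph{not} immediately followed by a descent, the sign choice for the new atom gives an unconstrained $\pm$, producing the $\ba+\bb$ substitution. The crucial case is a consecutive $\ba\bb$ pattern in $\bu_S$: here the two rank steps in $\lL$ can be refined in $\fF(\zZ)$ either (i) by first crossing a hyperplane and then performing a pure sign-flip, or (ii) in the opposite order; each option has two sign choices, giving the substitution $\ba\bb\mapsto 2(\ba\bb+\bb\ba)$. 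Summing over all $D$ and $S$ then yields the identity $\Psi_{\fF(\zZ)}=\omega(\ba\cdot\Psi_\lL)$.

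The principal obstacle is precisely the $\ba\bb\mapsto 2(\ba\bb+\bb\ba)$ clause: one has to construct the R-labeling of $\fF(\zZ)$ carefully enough that \emph{exactly} the pairs of coverings coming from a single hyperplane (one of which changes the zero-set and the other of which is a pure sign-flip) are the ones producing adjacent $\ba\bb$ or $\bb\ba$ patterns in the descent word, with all other coverings contributing independent $\ba+\bb$ factors. Verifying this local biject­ion — which is essentially the content of~\cite[Corollary~3.2]{BER97} — is where the genuine oriented-matroid combinatorics enters, and it is the step I would expect to require the most delicate argument.
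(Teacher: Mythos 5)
First, note that the paper does not prove this statement at all: Theorem~\ref{thm:BER} is imported verbatim from \cite[Corollary~3.2]{BER97}, so there is no internal proof to compare against. Your proposal must therefore be judged as a reconstruction of the argument of~\cite{BER97}, and as such it has a genuine gap rather than a complete proof. The entire content of the theorem is the local analysis you postpone in your last paragraph: constructing a strict $R$-labeling of $\fF(\zZ)$ (equivalently, of the big face lattice of the oriented matroid) that is compatible with the zero map to $\lL$, and verifying that exactly the occurrences of $\ba\bb$ in $\ba\cdot\bu_S$ pick up the factor $2(\ba\bb+\bb\ba)$ while all other letters contribute an independent $\ba+\bb$. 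You correctly identify this as the crux, but you do not exhibit the labeling, do not prove it is a strict $R$-labeling, and do not prove the fiber count; writing ``this is where the delicate argument enters'' is an accurate diagnosis, not a proof. In particular, it is not clear that a labeling with the stated compatibility properties exists at all --- the descent word of a lift of a maximal chain of $\lL$ would have to factor position-by-position in a way that respects the global placement of the $\ba\bb$ patterns (the left peaks of $S$), and this is precisely what cannot be taken for granted.

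It is also worth pointing out that the actual proof in~\cite{BER97} does not go through $R$-labelings of $\fF(\zZ)$. It counts, for each chain of flats in $\lL$, the chains of covectors lying over it under the zero map $z$, using the facts that lower intervals in the big face lattice are again big face lattices of minors and upper intervals are Boolean, together with Zaslavsky-type region counts expressed through the flag $f$-vector of $\lL$; the map $\omega$ then emerges from a generating-function identity. So even as an outline your route differs from the source, and the difference is not cosmetic: the fiber-counting argument sidesteps the need for the compatible labeling whose existence your sketch assumes. If you want to complete a proof along your lines, the missing ingredient you must supply is the explicit labeling together with the proof of the local product decomposition of the fibers; otherwise you should switch to the chain-fiber count of~\cite{BER97}.
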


Given a bounded, finite poset $\lL$, 
the \emph{pyramid} over $\lL$ is defined as $\Pyr(\lL) 
= \lL \times \lL_1$, where $\lL_1$ is the 2-element 
chain. The \emph{prism} over $\lL$, denoted 
$\Prism(\lL)$, is defined as the poset obtained by 
adding a minimum element to $(\lL \sm \{\hat{0}\}) 
\times (\lL_2 \sm \{\hat{0}\})$, where $\lL_2 = \lL_1 
\times \lL_1$. Then, for every polytope $\pP$, the 
posets $\Pyr(\fF(\pP))$ and $\Prism(\fF(\pP))$ are 
isomorphic to the face lattices of the pyramid and 
the prism over $\pP$, respectively. 
Following~\cite{ER98}, we consider the linear 
derivation $D: \ZZ \langle \ba, \bb \rangle \to \ZZ 
\langle \ba, \bb \rangle$ defined by setting $D(\ba) 
= D(\bb) = \ba \bb + \bb \ba$.

\begin{theorem} {\rm (\cite[Theorem~4.4]{ER98})}
We have
\begin{eqnarray}
2 \, \Psi_{\Pyr(\lL)} & = & \Psi_\lL \cdot (\ba + \bb) 
  \, + \, (\ba + \bb) \cdot \Psi_\lL \, + \, D(\Psi_\lL) 
	, \label{eq:ER1} \\ 
\Psi_{\Prism(\lL)} & = & \Psi_\lL \cdot (\ba + \bb) \, +
  \, D(\Psi_\lL) \label{eq:ER2}
\end{eqnarray}
for every bounded, graded finite poset $\lL$.
\label{thm:ER} 
\end{theorem}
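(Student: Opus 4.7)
The plan is to enumerate the maximal chains of $\Pyr(\lL) = \lL \times \lL_1$ and $\Prism(\lL)$ by their descent sets under well-chosen edge labelings, and match the resulting $\ba \bb$-polynomial against the right-hand sides of (\ref{eq:ER1}) and (\ref{eq:ER2}). First I would record the bijective decomposition of maximal chains: every maximal chain of $\Pyr(\lL)$ has the form
\[ (\hat 0, 0) = (x_0, 0) \lessdot \cdots \lessdot (x_j, 0) \lessdot (x_j, 1) \lessdot \cdots \lessdot (x_n, 1) = (\hat 1, 1) \]
for a unique maximal chain $C \colon \hat 0 = x_0 \lessdot \cdots \lessdot x_n = \hat 1$ of $\lL$ and a unique \emph{jump index} $j \in \{0, 1, \dots, n\}$; analogously, a maximal chain of $\Prism(\lL)$ is specified by a maximal chain of $\lL$, one of the two atoms of $\lL_2 \sm \{\hat 0\}$, and a jump index in $\{1, \dots, n\}$.

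Next, assuming $\lL$ admits a strict $R$-labeling $\lambda \colon \eE(\lL) \to \ZZ_{>0}$ (in full generality this labeling step is replaced by an alternating-sum/M\"obius-inversion computation of $\beta$ from $\alpha$), I would extend $\lambda$ to $\Pyr(\lL)$ in two ways, differing only on the vertical covers $(x, 0) \lessdot (x, 1)$: in one extension every vertical cover is labeled $-\infty$ (below all values of $\lambda$), and in the other $+\infty$ (above all values). A direct check on the intervals $[(x, 0), (y, 1)]$ shows that both extensions are strict $R$-labelings of $\Pyr(\lL)$, so each computes $\Psi_{\Pyr(\lL)}$ and their sum is $2 \Psi_{\Pyr(\lL)}$. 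For a maximal chain $(C, j)$ with $\ba \bb$-monomial $\bu(C) = u_1 \cdots u_{n-1}$ and $0 < j < n$, the two extensions contribute $u_1 \cdots u_{j-1}\, \bb \ba\, u_{j+1} \cdots u_{n-1}$ and $u_1 \cdots u_{j-1}\, \ba \bb\, u_{j+1} \cdots u_{n-1}$, respectively, whose sum is precisely the action of the derivation $D$ at position $j$. The boundary cases $j = 0$ and $j = n$ produce $(\ba + \bb)\, \bu(C)$ and $\bu(C)\, (\ba + \bb)$. Summing over all pairs $(C, j)$ yields the right-hand side of (\ref{eq:ER1}).

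The Prism identity (\ref{eq:ER2}) would follow from a parallel argument, where the two atoms of $\lL_2 \sm \{\hat 0\}$ play the role of the two labelings above, since each maximal chain of $\Prism(\lL)$ visits only one of them. Because the minimum element $\hat 0$ of $\Prism(\lL)$ is re-added by hand rather than coming from a product, the cover $\hat 0 \lessdot (a, b)$ is forced and contributes no boundary term of the form $(\ba + \bb)\, \Psi_\lL$, and correspondingly no factor of $2$ appears on the left. The main obstacle is the careful coordination of the labels assigned to the covers $\hat 0 \lessdot (a, b)$ with those on the vertical covers $(x, b) \lessdot (x, (1,1))$, so that a single labeling of $\Prism(\lL)$ delivers, across the two choices of $b$, exactly the two contributions needed to realize the derivation action $D$ at the interior positions and the right-boundary term at position $n$. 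Handling this bookkeeping, or bypassing it via direct flag-$f$-vector M\"obius inversion that works without any labeling assumption, is where most of the remaining work resides.
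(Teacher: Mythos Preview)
The paper does not prove this theorem: it is quoted verbatim from Ehrenborg--Readdy~\cite[Theorem~4.4]{ER98} as a background tool, with no proof supplied here. So there is no ``paper's own proof'' against which to compare your proposal.

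That said, your sketch is a legitimate route to the result. The decomposition of maximal chains of $\lL \times \lL_1$ by jump index is standard, and your two $\pm\infty$ extensions of a strict $R$-labeling are easily checked to be strict $R$-labelings of $\Pyr(\lL)$ (the unique increasing chain in an interval $[(x,0),(y,1)]$ jumps first under $-\infty$ and last under $+\infty$). Summing the two labelings really does produce $(\ba+\bb)\cdot\Psi_\lL + \Psi_\lL\cdot(\ba+\bb) + D(\Psi_\lL)$, with the interior jumps $1\le j\le n-1$ contributing exactly the Leibniz terms of $D$ since the letter $u_j$ of the original word is overwritten by $\bb\ba$ or $\ba\bb$. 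Your Prism outline is also on the right track: the two atoms of $\lL_2\sm\{\hat 0\}$ replace the two labelings, the artificial $\hat 0$ kills the left boundary term, and the jump index now ranges over $\{1,\dots,n\}$, yielding $\Psi_\lL\cdot(\ba+\bb)+D(\Psi_\lL)$.

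The genuine gap you flag yourself is the restriction to posets admitting a strict $R$-labeling. The statement is asserted for \emph{every} bounded graded finite poset, and the $\beta_\lL(S)$ need not be nonnegative in that generality, so a labeling argument cannot cover all cases. The fix you gesture at---working directly with the flag $f$-vector and M\"obius-inverting---is exactly what is needed, and it is straightforward once one observes that $\alpha_{\Pyr(\lL)}(T)$ decomposes as a sum of $\alpha_\lL$-values according to which ranks of $T$ lie below, at, or above the jump. Ehrenborg and Readdy's own argument in~\cite{ER98} packages this linearity via a coproduct on the incidence Hopf algebra, which is more structural but ultimately encodes the same chain-by-chain bookkeeping you describe.
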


\subsection{Real-rooted polynomials}
A polynomial $f(x)$ with real coefficients is called
\emph{real-rooted} if every root of $f(x)$ is real, or 
$f(x) \equiv 0$. 

A real-rooted polynomial $f(x)$, with 
roots $\alpha_1 \ge \alpha_2 \ge \cdots$, is said to 
\emph{interlace} a real-rooted polynomial $g(x)$, with 
roots $\beta_1 \ge \beta_2 \ge \cdots$, if
\[ \cdots \le \alpha_2 \le \beta_2 \le \alpha_1 \le
   \beta_1. \]
By convention, the zero polynomial interlaces and is 
interlaced by every real-rooted polynomial. 
A sequence $(f_0(x), f_1(x),\dots,f_m(x))$ of 
real-rooted polynomials is called \emph{interlacing} 
if $f_i(x)$ interlaces $f_j(x)$ for $0 \le i < j \le 
m$. The following standard lemma (see, for instance,
\cite[Section~7.8]{Bra15}) will be applied 
several times in this paper.
\begin{lemma} \label{lem:int-seq} 
Let $(f_0(x), f_1(x),\dots,f_m(x))$ be an interlacing 
sequence of real-rooted polynomials with positive 
leading coefficients.
\begin{itemize}
\itemsep=0pt
\item[(a)]
Every nonnegative linear combination $f(x)$ of 
$f_0(x), f_1(x),\dots,f_m(x)$ is real-rooted. Moroever, 
$f(x)$ interlaces $f_m(x)$ and it is interlaced by 
$f_0(x)$. 

\item[(b)]
The sequence $(g_0(x), g_1(x),\dots,g_{m+1}(x))$ defined
by
\[ g_k(x) \ = \ x \sum_{i=0}^{k-1} f_i(x) \, + \, 
             \sum_{i=k}^m f_i(x) \]
for $k \in \{0, 1,\dots,m+1\}$ is also interlacing.
\end{itemize}
\end{lemma}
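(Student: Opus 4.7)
The plan is to deduce both parts from a foundational single-step lemma: if real-rooted polynomials $p$ and $q$ with positive leading coefficients satisfy that $p$ interlaces $q$, then for every $\alpha, \beta \geq 0$ the combination $\alpha p + \beta q$ is real-rooted, is interlaced by $p$, and interlaces $q$. I would prove this lemma by evaluating $\alpha p + \beta q$ at the roots $\beta_1 \geq \beta_2 \geq \cdots$ of $q$: at each such root the value reduces to $\alpha p(\beta_i)$, and these values alternate in sign by the interlacing hypothesis. This pins down a root of $\alpha p + \beta q$ between each consecutive pair of roots of $q$, and the remaining root (when the degrees differ by one) is forced by the positive leading coefficients and the behavior of the combination at $\pm\infty$.

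For part (a), I would induct on $m$, with the trivial base case $m = 0$. In the inductive step, write $f = c_0 f_0 + g$ where $g = \sum_{i=1}^m c_i f_i$. The induction hypothesis applied to the interlacing subsequence $(f_1,\ldots,f_m)$ shows that $g$ is real-rooted, interlaces $f_m$, and is interlaced by $f_1$. A common-interlacer argument, obtained by iterating the single-step lemma using that $f_0$ interlaces each $f_i$ for $i \geq 1$, then shows $f_0$ also interlaces $g$. One final application of the single-step lemma with $p = f_0$ and $q = g$ yields the conclusions claimed for $f$, namely real-rootedness together with the interlacing relations $f_0$ interlaces $f$ interlaces $f_m$.

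For part (b), the task is to verify that $(g_0, g_1, \ldots, g_{m+1})$ is interlacing. I would first use part (a) to show each $g_k$ is real-rooted: decomposing $g_k = xP_k + Q_k$ with $P_k = \sum_{i<k} f_i$ and $Q_k = \sum_{i \geq k} f_i$, both summands are real-rooted and a strengthened form of part (a) gives that $P_k$ interlaces $Q_k$; then a variant of the single-step lemma, adapted to the operator $p \mapsto xp$ acting on the first summand, handles $xP_k + Q_k$. For pairwise interlacing of $g_j$ and $g_k$ with $j < k$, I would exploit the explicit relation $g_k = g_j + (x-1) R_{j,k}$ with $R_{j,k} = \sum_{i=j}^{k-1} f_i$, where by part (a), $R_{j,k}$ is real-rooted with well-controlled interlacing relations to the surrounding $f_i$. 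The main obstacle I anticipate is precisely this pairwise step: since interlacing is not transitive, one cannot chain consecutive interlacings $g_k$ interlaces $g_{k+1}$ into the desired conclusion, so the argument must proceed directly from the explicit difference formula, carefully tracking degrees and the relative locations of roots at each stage of the sign-change analysis.
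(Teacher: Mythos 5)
The paper does not actually prove this lemma; it cites it as standard from Br\"and\'en's survey, where the corresponding results are stated for interlacing sequences of polynomials with \emph{nonnegative coefficients}. Your overall architecture --- a single-step lemma proved by sign alternation, induction for (a), and the decompositions $g_k = xP_k + Q_k$ and $g_k - g_j = (x-1)R_{j,k}$ for (b) --- is the standard one. One imprecision in (a): iterating the single-step lemma does not yield that $f_0$ interlaces $g=\sum_{i\ge1}c_if_i$. The single-step lemma only tells you $g$ is interlaced by $f_1$, and, as you yourself note later, interlacing is not transitive, so you cannot chain ``$f_0$ interlaces $f_1$'' with ``$f_1$ interlaces $g$''. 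What you need is the separate (equally standard, and provable by the same sign-evaluation technique) common-interlacer lemma: if $p$ interlaces each of $q_1,\dots,q_r$, then $p$ interlaces every nonnegative combination $\sum c_iq_i$, together with its dual (if each $p_i$ interlaces $q$, then $\sum c_ip_i$ interlaces $q$). With those two facts stated and proved, part (a) is fine.

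The genuine gap is in part (b). Every step involving multiplication by $x$ --- the ``variant of the single-step lemma'' for $xP_k+Q_k$, and the sign analysis of $(x-1)R_{j,k}$ --- silently requires that all roots in sight are nonpositive, equivalently that the $f_i$ have nonnegative coefficients. The fact you actually need is that $Q_k$ interlaces $xP_k$; this holds because $0$ becomes the new largest root of $xP_k$, and it fails if some root is positive. Indeed, part (b) is false under only the stated hypotheses: $(f_0,f_1)=(x-1,x-3)$ is an interlacing pair of real-rooted polynomials with positive leading coefficients, yet $g_0=2x-4$ (root $2$) does not interlace $g_1=xf_0+f_1=x^2-3$ (roots $\pm\sqrt3$). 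The nonnegativity hypothesis is implicit in the paper (it holds in every application, and is built into Br\"and\'en's definition of the class $\mathcal F_n$), but your proof must invoke it explicitly wherever a factor of $x$ appears. Granting it, your pairwise step does close, though not quite as sketched: the right auxiliary claim is that $R_{j,k}$ interlaces $g_j$ (obtained from the common-interlacer lemma applied to $g_j = x\sum_{i<j}f_i + \sum_{i\ge j}f_i$, again using nonpositivity of roots to handle the $x$-factor), after which $g_k(\theta)=(\theta-1)R_{j,k}(\theta)$ has the correct alternating signs at the roots $\theta\le0$ of $g_j$ precisely because $\theta-1<0$ there.
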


A polynomial $f(x) = a_0 + a_1 x + \cdots + a_n x^n$ 
with real coefficients is called \emph{symmetric}, 
with center of symmetry $n/2$, if $a_i = a_{n-i}$ for 
all $0 \le i \le n$. Then, $f(x) = 
\sum_{i=0}^{\lfloor n/2 \rfloor} \gamma_i x^i 
(1+x)^{n-2i}$ for some uniquely defined real numbers 
$\gamma_0, \gamma_1,\dots,\gamma_{\lfloor n/2 \rfloor}$ 
and $\gamma(x) = \sum_{i=0}^{\lfloor n/2 \rfloor} 
\gamma_i x^i$ is the \emph{$\gamma$-polynomial} 
associated to $f(x)$. The latter is called 
\emph{$\gamma$-positive} if $\gamma_i \ge 0$ for every 
$i$; see \cite{Ath18} \cite[Section~7.3]{Bra15} for 
more information on this concept.  

\section{Subspace lattices} 
\label{sec:subspace}

This section confirms Theorem~\ref{thm:mainA} for 
subspace lattices. The proof essentially follows by
combining \cite[Theorem~3.13.3]{StaEC1} with
\cite[Theorem~5.4]{SV15} but serves as a paradigm 
for the proofs of the corresponding statements for 
the lattices $\Pi_n$ and $\Pi^B_n$ in the following 
section, which are more involved.

Given a positive integer $n$ and a prime power $q$,
let $V_n(q)$ be an $n$-dimensional vector space 
over the field with $q$ elements. The \emph{subset 
lattice} $\lL_n$, known as the Boolean algebra of 
rank $n$, and the \emph{subspace lattice} $\lL_n(q)$ 
are defined \cite[Example~3.1.1]{StaEC1} as the 
set of all subsets of the set $[n]$ and as the set 
of all linear subspaces of $V_n(q)$, respectively, 
partially ordered by inclusion. The posets $\lL_n$ 
and $\lL_n(q)$ are geometric lattices of rank $n$, 
the latter being considered as a $q$-analogue of the 
former. We recall that a \emph{descent} of a permutation 
$w \in \fS_n$ is an index $i \in [n-1]$ such that $w(i) 
> w(i+1)$ and denote by $\des(w)$ and $\Des(w)$ the  
number and the set of all descents of $w$, 
respectively. 

The order complex $\Delta(\overline{\lL}_n)$ is 
isomorphic to the first barycentric subdivision of 
the boundary complex of the $(n-1)$-dimensional 
simplex and its $h$-polynomial $h_{\lL_n}(x) =
h(\Delta(\overline{\lL}_n),x)$ is known 
\cite[Theorem~9.1]{Pet15} to be equal to the $n$th 
Eulerian polynomial
\begin{equation*} \label{eq:defAn}
A_n(x) \ := \, \sum_{w \in \fS_n} x^{\des(w)} . 
\end{equation*}
The latter is well known \cite[Section~7.8.1]{Bra15}
to have only real roots 
for every $n$. As already discussed, the following 
statement is equivalent to part (a) of 
Theorem~\ref{thm:mainA}.
\begin{proposition} \label{prop:subspace}
The polynomial $h_{\lL}(x)$ is real-rooted for every 
subspace lattice $\lL = \lL_n(q)$.
\end{proposition}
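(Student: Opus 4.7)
The plan is to identify $h_{\lL_n(q)}(x)$ explicitly as a $q$-analogue of the Eulerian polynomial $A_n(x)$ via an appropriate $R$-labeling of $\lL_n(q)$, and then invoke the real-rootedness result of Savage and Visontai. The starting point is Stanley's formula \cite[Theorem 3.13.3]{StaEC1}, which gives the flag $f$-vector entries of $\lL_n(q)$ as Gaussian multinomial coefficients: for every $S = \{s_1 < s_2 < \cdots < s_k\} \subseteq [n-1]$,
\[ \alpha_{\lL_n(q)}(S) \ = \ \binom{n}{s_1,\, s_2-s_1,\, \dots,\, n-s_k}_q . \]

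Next, I would pass from $\alpha_{\lL_n(q)}$ to $\beta_{\lL_n(q)}$ by applying the standard $R$-labeling of $\lL_n(q)$, in which each covering relation $U \prec W$ is labeled by the position, with respect to a fixed ordered basis of $V_n(q)$, of the ``first new pivot'' used in reducing $W$ modulo $U$. This labeling is a well-known strict $R$-labeling, and a standard computation (tracking how the Gaussian coefficients decompose when a chain is built one cover at a time) yields a combinatorial expression of the form
\[ \beta_{\lL_n(q)}(S) \ = \sum_{\substack{w \in \fS_n \\ \Des(w) = S}} q^{t(w)} \]
for a suitable nonnegative statistic $t$ on $\fS_n$ (essentially $\maj(w) - \binom{|S|+1}{2}$, so that the leading term in $q$ recovers the Boolean case). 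Summing over $S$ via Equation~(\ref{eq:hL(x)}) then gives
\[ h_{\lL_n(q)}(x) \ = \sum_{w \in \fS_n} q^{t(w)} \, x^{\des(w)} , \]
a specific $q$-analogue of $A_n(x)$.

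Finally, I would appeal to \cite[Theorem 5.4]{SV15}, which asserts that exactly such $q$-Eulerian polynomials are real-rooted for every fixed positive value of the parameter $q$. Since every prime power is positive, this gives the real-rootedness of $h_{\lL_n(q)}(x)$ for all admissible $q$, and hence of $p_{\lL_n(q)}(x)$ by the equivalence noted just after the definition of the $h$-polynomial.

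The main obstacle I would expect is not the real-rootedness step itself, which is handed to us by SV15, but the bookkeeping needed to pin down the exact statistic $t$: one has to choose the $R$-labeling carefully and verify that the induced combinatorial refinement of the Gaussian multinomial matches the statistic on which SV15 operates. Once $h_{\lL_n(q)}(x)$ is rewritten in the form that SV15 recognizes, the conclusion is immediate, and this is precisely why the authors describe the proof as a paradigm for the harder arguments to come for $\Pi_n$ and $\Pi^B_n$, where analogous interpretations of the flag $h$-vector must be found from scratch.
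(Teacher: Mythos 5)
Your proposal is correct and follows essentially the same route as the paper: Stanley's Theorem~3.13.3 in fact directly gives $\beta_{\lL_n(q)}(S) = \sum_{w \in \fS_n:\, \Des(w)=S} q^{\inv(w)}$, so the statistic $t$ you were worried about pinning down is simply $\inv$ (not a $\maj$-type statistic), and then $h_{\lL_n(q)}(x) = A_n(x;q) = \sum_{w\in\fS_n} q^{\inv(w)}x^{\des(w)}$ is real-rooted for every positive $q$ by the cited result of Savage and Visontai. The only thing the paper adds is a short self-contained interlacing argument, refining $A_n(x;q)$ by the last letter of $w$ and applying Lemma~\ref{lem:int-seq} to the resulting recurrence, in place of the black-box citation.
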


\begin{proof}
Setting $\lL := \lL_n(q)$, we have 
\cite[Theorem~3.13.3]{StaEC1} 
\[ \beta_\lL(S) \ = \sum_{w \in \fS_n : \, \Des(w) = 
   S} q^{\inv(w)} , \]
where $\inv(w)$ is the number of pairs of indices 
$1 \le i < j \le n$ for which $w(i) > w(j)$. This 
formula and Equation~(\ref{eq:hL(x)}) imply that 
$h_{\lL}(x) = A_n(x;q)$, where 
\[ A_n(x;q) \ := \ \sum_{w \in \fS_n} q^{\inv(w)} 
   x^{\des(w)} \]
is one of the well studied $q$-analogues of $A_n(x)$
\cite[Section~3.19]{StaEC1}. The fact that $A_n(x;q)$ 
is real-rooted for every positive real number $q$ was 
shown in \cite[Theorem~5.4]{SV15}. To give a 
self-contained proof, we set 
\[ A_{n,k}(x;q) \ = \sum_{w \in \fS_n: \, w(n) = k} 
   q^{\inv(w)} x^{\des(w)} \]
for $k \in [n]$ and note that $A_n(x;q) = 
A_{n+1,n+1}(x;q)$ for every $n$ and that 
\begin{equation} \label{eq:Anq-rec}
 A_{n+1,k}(x;q) \ = \ q^{n+1-k} \left( \,
 \sum_{i=1}^{k-1} A_{n,i}(x;q) \, + \, x 
\sum_{i=k}^n A_{n,i}(x;q) \right)
\end{equation}
for $k \in [n+1]$. An application of 
Lemma~\ref{lem:int-seq} shows by induction on $n$ 
that the sequence $(A_{n,n}(x;q), 
A_{n,n-1}(x;q),\dots,A_{n,1}(x;q))$ is interlacing 
for all $n$ and positive $q$. As a result, 
$A_n(x;q) = A_{n+1,n+1}(x;q)$ is real-rooted for 
all $n$ and positive $q$.
\end{proof}

The previous argument also shows that $A_n(x;q)$ 
interlaces $A_{n+1}(x;q)$ for all $n$ and all 
positive $q$. This follows from part (a) of 
Lemma~\ref{lem:int-seq} since, by~(\ref{eq:Anq-rec}), 
$A_n(x;q) = A_{n+1,n+1}(x;q)$ is a positive linear 
combination of the 
$A_{n,k}(x;q)$ for $1 \le k \le n$ and $A_{n,n}(x;q) 
= A_{n-1}(x;q)$. For the real-rootedness of other 
$q$-analogues of $A_n(x)$, see \cite[Section~5]{SV15}.

\section{Partition lattices} 
\label{sec:Pns}

This section proves part (b) of 
Theorem~\ref{thm:mainA}. As in
Section~\ref{sec:subspace}, we will show the 
equivalent statement that $h_{\Pi_n}(x)$ and 
$h_{\Pi^B_n}(x)$ are real-rooted for every 
$n \ge 1$. 
			
\subsection{The partition lattice.}
\label{sec:Pn}

We recall that $\Pi_n$ consists of all partitions 
of the set $[n]$, partially ordered by reverse 
refinement. It is isomorphic to the intersection 
lattice of the Coxeter hyperplane arrangement of 
type $A_{n-1}$ and, as such, it is a geometric 
lattice of rank $n-1$. We will first give an 
explicit combinatorial interpretation of the flag 
$h$-vector of $\Pi_n$
and will deduce one for $h_{\Pi_n}(x)$. A 
recurrence for the entries of the former was found 
by Sundaram~\cite[Proposition~2.16]{Su94}; an 
additional formula appears as 
\cite[Proposition~2.18]{Su94}. We consider the 
multiset
\[ \aA_n \ := \ \{1\} \times \{1, 1, 2\} \times 
   \{1, 1, 1, 2, 2, 3\} \times \cdots \times 
	 \{1, 1,\dots,1,\dots,n-2, n-2, n-1\} , \]
e.g., $\aA_3 = \{ (1,1), (1,1), (1,2) \}$.
We define the \emph{descent set} of $\sigma  = 
(\sigma_1, \sigma_2,\dots,\sigma_{n-1}) \in \aA_n$ 
as $\Des(\sigma) = \{ i \in [n-2]: \sigma_i
\ge \sigma_{i+1} \}$ and denote its cardinality by
$\des(\sigma)$. The multiset $\aA_n$ has 
$\prod_{k=2}^n {k \choose 2} = n! \, (n-1)! / 2^{n-1}$ 
elements, as many as the maximal chains of $\Pi_n$. 
Moreover, one element of $\aA_n$ has empty descent 
set and $(n-1)!$ of them have descent set equal to 
$[n-2]$. These facts agree with the following 
statement.

\begin{proposition} \label{prop:betaPi} 
For every $n \ge 2$ and every $S \subseteq [n-2]$,
the number $\beta_{\Pi_n}(S)$ is equal to the 
number of elements of the multiset $\aA_n$ with 
descent set equal to $n-1-S := \{ n-1-x: x \in S\}$. 
In particular,
\begin{equation} \label{eq:hPi(x)}
h_{\Pi_n}(x) \ = \, \sum_{\sigma \in \aA_n} 
x^{\des(\sigma)} 
\end{equation}
for every $n \ge 2$.
\end{proposition}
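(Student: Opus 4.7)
Since $|S| = |n-1-S|$, summing the flag $h$-vector identity over $S \subseteq [n-2]$ yields formula~(\ref{eq:hPi(x)}) at once, so it suffices to prove the first identity of the proposition.

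My plan is to apply Björner's standard EL-labeling of $\Pi_n$: for each cover $\pi \lessdot \sigma$ where $\sigma$ is obtained by merging two blocks $B, B'$ of $\pi$ with $\min B < \min B'$, define $\lambda(\pi,\sigma) := \min B'$. Classical arguments show that $\lambda$ is a strict $R$-labeling of $\Pi_n$; the unique strictly increasing maximal chain in any interval is the one that successively merges blocks in increasing order of their larger minima. Hence, by the theory of $R$-labelings reviewed in Section~\ref{sec:pre}, $\beta_{\Pi_n}(S)$ equals the number of maximal chains $C \in \mM(\Pi_n)$ with $\Des_\lambda(C) = S$.

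The heart of the proof is to establish a bijection $\Phi\colon \mM(\Pi_n) \to \aA_n$ satisfying $\Des(\Phi(C)) = n-1-\Des_\lambda(C)$ for every $C$. Writing $C = (\pi_0 \lessdot \pi_1 \lessdot \cdots \lessdot \pi_{n-1})$, the $k$-th cover selects one of the $\binom{n-k+1}{2}$ pairs of blocks of $\pi_{k-1}$ to merge, and these choices are in natural correspondence with the $\binom{n-k+1}{2}$ elements of the $(n-k)$-th factor of $\aA_n$. A first candidate sets $\sigma_{n-k} := i_k$, where $i_k$ is the position (in the min-ordering of $\pi_{k-1}$) of the smaller-minimum block among the two being merged; the multiset multiplicities then align, as the value $i$ occurs for exactly $n-k+1-i$ choices of the larger-min block. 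One then reads $\Phi(C) = (\sigma_1,\ldots,\sigma_{n-1}) \in \aA_n$ along the reversed chain.

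The main obstacle is to verify the descent correspondence. A $\lambda$-descent at position $k$ is the comparison $\min B^{(k)}_{j_k} \geq \min B^{(k+1)}_{j_{k+1}}$ between the larger minima of two successive merges, whereas a $\sigma$-descent at position $n-1-k$ is the comparison $i_{k+1} \geq i_k$ between positional statistics; equating the two requires a careful case analysis tracking how the min-ordering of blocks reshuffles after a merge, and the naive encoding above may need to be refined to achieve the exact descent matching. A more robust alternative is induction on $n$: Sundaram's recurrence \cite[Proposition~2.16]{Su94} expresses $\beta_{\Pi_n}(S)$ in terms of smaller flag $h$-vector entries, while the factorization $\aA_n = \aA_{n-1} \times \{1^{n-1},2^{n-2},\ldots,(n-1)^1\}$ yields a matching recurrence for $\#\{\sigma \in \aA_n : \Des(\sigma) = n-1-S\}$, handled separately according to whether $n-2 \in n-1-S$, i.e.~according to whether the final coordinate of $\sigma$ participates in a descent. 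The base case $n=2$ is immediate.
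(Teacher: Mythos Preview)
Your overall strategy---use Gessel's (equivalently Bj\"orner's) strict $R$-labeling $\lambda$ of $\Pi_n$ and build a multiset bijection $\mM(\Pi_n)\to\aA_n$ matching descent sets---is exactly the paper's approach. But the proposal has a genuine gap at the one nontrivial step: your ``first candidate'' encoding $\sigma_{n-k}:=i_k$ (the position of the \emph{smaller}-minimum block) does not satisfy the descent correspondence, and you do not carry out either of the two fixes you suggest.

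Concretely, take $n=3$ and the chain $C: 1|2|3 \lessdot 12|3 \lessdot 123$. Here $\lambda(C)=(2,3)$ has no descent, so you need $\Des(\Phi(C))=\emptyset$. But $i_1=i_2=1$, giving $\Phi(C)=(1,1)$ with $\Des((1,1))=\{1\}$. The encoding that actually works records the position $j_k$ of the \emph{larger}-minimum block: one checks in one line that $\lambda(x,y)\ge\lambda(y,z)\Longleftrightarrow \psi(x,y)>\psi(y,z)$ where $\psi(x,y):=j$, because both inequalities are equivalent to ``the second merge involves only blocks $B_1,\dots,B_{j-1}$ and the new block $B_i\cup B_j$.'' This sends $\mM(\Pi_n)$ to the reversed multiset $\aA^\ast_n$ (with strict descents $\Des^\ast$), and the affine bijection $(\sigma_1,\dots,\sigma_{n-1})\mapsto(n+1-\sigma_{n-1},\dots,3-\sigma_1)$ then converts $\Des^\ast$ on $\aA^\ast_n$ into $\Des$ on $\aA_n$ reflected through $n-1$. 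Your inductive alternative via Sundaram's recurrence is plausible but is also only sketched; as written, the proposal does not establish the proposition.
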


To prepare for the proof, we recall 
\cite[Section~3.2.2]{Wa07} the following edge 
labeling of $\Pi_n$, due to Gessel. For a covering
relation $(x, y) \in \eE(\Pi_n)$ we define $\lambda
(x, y)$ as the maximum of $\min(B)$ and $\min(B')$,
where $y$ is obtained from $x$ by merging the blocks
$B$ and $B'$ of $x$. The labeling $\lambda: \eE(\Pi_n)
\to \{2, 3,\dots,n\}$ is a strict $R$-labeling (even
a strict EL-labeling). In particular, $\beta_{\Pi_n}
(S)$ is equal to the number of maximal chains $C$ of 
$\Pi_n$ such that $\Des_\lambda(C) = S$, 
for every $S \subseteq [n-2]$.

\medskip
\noindent
\emph{Proof of Proposition~\ref{prop:betaPi}}.
We consider the multiset
\[ \aA^\ast_n \ := \ 
   \{2, 3, 3, 4, 4, 4,\dots,n, n,\dots,n\} 
	 \times \cdots \times \{2, 3, 3, 4, 4, 4\} 
	 \times \{2, 3, 3\} \times \{2\} \]
and set $\Des^\ast(\sigma) = \{ i \in [n-2]: 
\sigma_i > \sigma_{i+1} \}$ for $\sigma  = 
(\sigma_1, \sigma_2,\dots,\sigma_{n-1}) \in 
\aA^\ast_n$. The bijection $\aA_n \mapsto 
\aA^\ast_n$ defined by 
\[ (\sigma_1, \sigma_2,\dots,\sigma_{n-1}) \ 
   \mapsto \ (n+1-\sigma_{n-1}, 
	 n-\sigma_{n-2},\dots,3-\sigma_1) \]
shows that for every $S \subseteq [n-2]$, the 
number of elements $\sigma \in \aA^\ast_n$ with 
$\Des^\ast(\sigma) = S$ is equal to the number of 
elements $\sigma \in \aA_n$ with $\Des(\sigma) = 
n-1-S$. Thus, we need to show that the former is 
equal to $\beta_{\Pi_n}(S)$.

For that, we employ Gessel's labeling $\lambda: 
\eE(\Pi_n) \to \{2, 3,\dots,n\}$. Given a covering
relation $(x, y) \in \eE(\Pi_n)$, we list the 
blocks of $x = \{B_1, B_2,\dots,B_k\}$ so that 
$\min(B_1) < \min(B_2) < \cdots < \min(B_k)$ and 
set $\varphi(x,y) = (i,j)$ and $\psi(x,y) = j$, 
where $i < j$ and $y$ is obtained from $x$ by 
merging $B_i$ with $B_j$. For a maximal chain 
$C: \hat{0} = c_1 \prec c_2 \prec \cdots \prec
c_n = \hat{1}$ of $\Pi_n$ we set 
\begin{eqnarray*}
\widetilde{\varphi}(C) & = & (\varphi(c_1,c_2), 
\varphi(c_2,c_3),\dots,\varphi(c_{n-1},c_n)) , \\ 
\widetilde{\psi}(C) & = & (\psi(c_1,c_2), 
          \psi(c_2,c_3),\dots,\psi(c_{n-1},c_n))
\end{eqnarray*}
and consider the resulting maps 
\begin{eqnarray*}
\widetilde{\varphi} : & \mM(\Pi_n) \ \to & 
{[n] \choose 2} \times {[n-1] \choose 2} \times 
\cdots \times {[2] \choose 2} , \\ 
\widetilde{\psi} : & \mM(\Pi_n) \ \to & 
\{2, 3,\dots,n\} \times \{2, 3,\dots,n-1\} \times 
\cdots \times \{2\} ,
\end{eqnarray*}

\smallskip
\noindent
where ${T \choose 2}$ stands for the set of 
2-element subsets of $T$.

Clearly, $\widetilde{\varphi}$ is a bijection. We 
claim that $\Des_\lambda(C) = \Des^\ast
(\widetilde{\psi}(C))$ for every $C \in \mM(\Pi_n)$. 
This would imply that $\beta_{\Pi_n}(S)$, which is 
equal to the number of chains $C \in \mM(\Pi_n)$ 
with $\Des_\lambda(C) = S$, is also equal to the 
number of chains $C \in \mM(\Pi_n)$ with 
$\Des^\ast(\widetilde{\psi}(C)) = S$. Since 
$\widetilde{\varphi}$ is a bijection, the latter
is equal to the number of $\sigma \in \aA^\ast_n$ 
with $\Des^\ast(\sigma) = S$.

Thus, it remains to verify the claim. Equivalently,
for $(x, y), (y, z) \in \eE
(\Pi_n)$, we need to verify that $\lambda(x, y) \ge 
\lambda(y, z) \Leftrightarrow \psi(x, y) > \psi
(y, z)$. Indeed, let $x = \{B_1, B_2,\dots,B_k\}$ 
with $\min(B_1) < \min(B_2) < \cdots < \min(B_k)$ 
and suppose that $y$ is obtained from $x$ by 
merging $B_i$ with $B_j$, where $i < j$. Then, 
$\lambda(x, y) = \min(B_j)$ and $\psi(x, y) = j$.
Each one of the inequalities $\lambda(y, z) \le 
\min(B_j)$ and $\psi(y, z) < j$ is equivalent to 
the statement that $z$ is obtained from $y$ by 
merging two blocks other than $B_{j+1},\dots,B_k$ 
and the proof follows. 
\qed

\medskip
For small values of $n$,
\[  h_{\Pi_n}(x) \ = \ \begin{cases}
    1, & \text{if $n = 1$} \\
    1, & \text{if $n = 2$} \\
    1 + 2x, & \text{if $n=3$} \\
    1 + 11x + 6x^2, & \text{if $n=4$} \\
    1 + 47x + 108x^2 + 24x^3, & 
		                  \text{if $n=5$} \\
    1 + 197x + 1268x^2 + 1114x^3 + 120x^4, & 
		                  \text{if $n=6$} \\
    1 + 870x + 13184x^2 + 29383x^3 + 12542x^4 
		         + 720x^5, & \text{if $n=7$}. \\
    \end{cases}  \]

The following corollary proves and strengthens
Theorem~\ref{thm:mainA}.
\begin{corollary} \label{cor:hPi(x)}
The polynomial $h_{\Pi_n}(x)$ is real-rooted and 
it interlaces $h_{\Pi_{n+1}}(x)$ for every $n \ge 1$.
\end{corollary}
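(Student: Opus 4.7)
The plan is to refine $h_{\Pi_n}(x)$ according to the last coordinate of $\sigma \in \aA_n$ and mimic the subspace lattice argument of Proposition~\ref{prop:subspace}. For $k \in [n-1]$ set
\[ h_{n,k}(x) \ := \sum_{\sigma \in \aA_n: \, \sigma_{n-1} = k} x^{\des(\sigma)}, \]
so that $h_{\Pi_n}(x) = \sum_{k=1}^{n-1} h_{n,k}(x)$ by~(\ref{eq:hPi(x)}). The $n$-th factor of $\aA_{n+1}$ contains each integer $k \in [n]$ with multiplicity $n+1-k$, and appending $\sigma_n = k$ to $\sigma \in \aA_n$ with $\sigma_{n-1} = j$ creates a new descent precisely when $j \ge k$. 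A direct bookkeeping therefore yields the recursion
\[ h_{n+1,k}(x) \ = \ (n+1-k) \left( \, \sum_{j=1}^{k-1} h_{n,j}(x) \ + \ x \sum_{j=k}^{n-1} h_{n,j}(x) \right) \]
for $k \in [n]$; in particular, setting $k = n$ gives $h_{n+1,n}(x) = h_{\Pi_n}(x)$.

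The heart of the proof is an inductive claim: the sequence $(h_{n,n-1}(x), h_{n,n-2}(x),\dots,h_{n,1}(x))$ is interlacing, with entries real-rooted and having nonnegative coefficients and positive leading coefficients. The base case $n = 2$ is trivial. For the inductive step, apply Lemma~\ref{lem:int-seq}(b) to the interlacing sequence $(f_0,f_1,\dots,f_{n-2}) := (h_{n,n-1}, h_{n,n-2},\dots,h_{n,1})$. After the index change $j = n-1-i$, the output polynomials $g_0, g_1,\dots,g_{n-1}$ of that lemma become $\tfrac{1}{k+1} h_{n+1, n-k}(x)$ for $k = 0,1,\dots,n-1$, which is precisely what the recursion above predicts. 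Since positive rescaling preserves interlacing, the sequence $(h_{n+1,n}, h_{n+1,n-1},\dots,h_{n+1,1})$ is interlacing, closing the induction.

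The proposition then follows from Lemma~\ref{lem:int-seq}(a): $h_{\Pi_{n+1}}(x) = \sum_{k=1}^n h_{n+1,k}(x)$ is a nonnegative linear combination of the interlacing sequence just established, whose first term equals $h_{n+1,n}(x) = h_{\Pi_n}(x)$; hence $h_{\Pi_{n+1}}(x)$ is real-rooted and is interlaced by $h_{\Pi_n}(x)$. The chief technical point that requires care is the orientation of the sequence: the polynomials $h_{n,k}$ must be listed in decreasing order of $k$ in order to match the shape of Lemma~\ref{lem:int-seq}(b), and one must carefully track this reindexing when identifying the $g_k$'s with the $h_{n+1,k}$'s. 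Beyond this bookkeeping and the verification of the recursion via the combinatorics of $\aA_n$, no new analytic input is needed.
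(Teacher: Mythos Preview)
Your proof is correct and follows essentially the same approach as the paper: define $h_{n,k}(x)$ by the last coordinate of $\sigma$, derive the same recursion, and use Lemma~\ref{lem:int-seq} inductively on the reversed sequence $(h_{n,n-1},\dots,h_{n,1})$, noting that the positive factors $n+1-k$ can be absorbed since interlacing is preserved under positive rescaling. The paper's proof is slightly terser about the reindexing, but the argument is the same.
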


\begin{proof}
For $k \in [n-1]$ we set 
\[ h_{n,k}(x) \ = \sum_{\sigma \in \aA_{n,k}} 
   x^{\des(\sigma)} , \]
where $\aA_{n,k}$ is the multiset consisting of 
all words $(\sigma_1, \sigma_2,\dots,\sigma_{n-1})  
\in \aA_n$ with $\sigma_{n-1} = k$. 
By Proposition~\ref{prop:betaPi}, 
\[ h_{\Pi_n}(x) \ = \ h_{n+1,n}(x) \ = \ 
                      \sum_{k=1}^{n-1} h_{n,k}(x) \]
for every $n \ge 2$ and 
\[ h_{n+1,k}(x) \ = \ (n+1-k) \left( \,
   \sum_{i=1}^{k-1} h_{n,i}(x) \, + \, x 
	 \sum_{i=k}^{n-1} h_{n,i}(x) \right) \]
for $k \in [n]$. Since $(h_{n+1,n}(x), 
h_{n+1,n-1}(x),\dots,h_{n+1,1}(x))$ is an interlacing 
sequence if and only if $(h_{n+1,n}(x), \frac{1}{2} 
h_{n+1,n-1}(x),\dots,\frac{1}{n} h_{n+1,1}(x))$ 
has the same property, an application of 
Lemma~\ref{lem:int-seq} shows by induction on $n$ that 
$(h_{n,n-1}(x), h_{n,n-2}(x),\dots,h_{n,1}(x))$ is 
interlacing for every $n \ge 2$ and that $h_{\Pi_n}
(x) = \sum_{k=1}^{n-1} h_{n,k}(x)$ is interlaced by 
$h_{n,n-1}(x) = h_{\Pi_{n-1}}(x)$.
\end{proof}


The following conjecture has been verified for 
$n \le 20$.
\begin{conjecture} \label{conj:hPi(x)}
The polynomial $h_{\Pi_n}(x)$ is interlaced by 
the Eulerian polynomial $A_{n-1}(x)$ for every 
$n \ge 2$.
\end{conjecture}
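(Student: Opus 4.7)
The natural first attempt is to strengthen Conjecture~\ref{conj:hPi(x)} to the claim that the augmented sequence $(A_{n-1}(x), h_{n,n-1}(x), h_{n,n-2}(x), \ldots, h_{n,1}(x))$ is interlacing, where $h_{n,k}(x)$ is the refinement introduced in the proof of Corollary~\ref{cor:hPi(x)}. If this were true, Lemma~\ref{lem:int-seq}(a) would immediately yield that $A_{n-1}(x)$ interlaces the positive linear combination $h_{\Pi_n}(x) = \sum_{k=1}^{n-1} h_{n,k}(x)$. However, a numerical check at $n=4$ already shows this strengthening fails: the larger root of $A_3(x) = 1 + 4x + x^2$ is $-2 + \sqrt{3} \approx -0.268$, while the unique root of $h_{4,3}(x) = h_{\Pi_3}(x) = 1 + 2x$ is $-1/2$, so the required inequality $\alpha_1 \le \beta_1$ is violated and $A_3(x)$ does not interlace $h_{4,3}(x)$. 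Thus a more subtle approach is needed.

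A promising alternative is to seek a different refinement of $\aA_n$, or a finer joint interlacing structure, that accommodates $A_{n-1}(x)$. One candidate is to refine $\aA_n$ by an interior coordinate, by a pair such as $(\sigma_1, \sigma_{n-1})$, or by a multi-statistic, and iterate Lemma~\ref{lem:int-seq}(b) starting from a base sequence that naturally involves the Eulerian polynomials $A_k(x)$. A related possibility is to exploit the natural bijection between $\aA_n$ and weighted subexcedant sequences $(s_1, \ldots, s_{n-1})$ with $s_i \in [i]$, in which each such sequence carries the weight $\prod_{i=1}^{n-1}(i+1-s_i)$ equal to its multiplicity in $\aA_n$. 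Composed with the Lehmer-code bijection to $\fS_{n-1}$, this would rewrite $h_{\Pi_n}(x)$ as a weighted sum over permutations of $[n-1]$, enabling a direct combinatorial comparison with $A_{n-1}(x) = \sum_{w \in \fS_{n-1}} x^{\des(w)}$ and perhaps a verification of interlacing via a common-interlacer criterion or the Hermite--Biehler theorem.

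The main obstacle, and the reason that the conjecture appears genuinely harder than Corollary~\ref{cor:hPi(x)}, is that the Eulerian polynomial $A_{n-1}(x)$ does not arise naturally from any chain-enumerative structure on $\Pi_n$: Gessel's edge labeling, which underlies Proposition~\ref{prop:betaPi}, produces labeled maximal chains in bijection with $\aA_n$, a multiset of sequences that does not reduce to $\fS_{n-1}$ in a descent-preserving way. Any successful proof would have to build a bridge between these two combinatorial worlds, for instance through an explicit descent-compatible weighted bijection with $\fS_{n-1}$, a real-rootedness-preserving operator taking $A_{n-1}(x)$ to $h_{\Pi_n}(x)$, or a clever joint induction strengthening both the conjecture and Corollary~\ref{cor:hPi(x)}. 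The extensive computational evidence for $n \le 20$ cited by the authors suggests that such a bridge exists, but identifying it appears to be the crux of the problem.
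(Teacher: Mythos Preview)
The statement you are addressing is labeled \emph{Conjecture}~\ref{conj:hPi(x)} in the paper, and the paper does not prove it: immediately before stating it, the authors write only that ``the following conjecture has been verified for $n \le 20$.'' There is therefore no proof in the paper against which to compare your proposal.

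Your proposal, correspondingly, is not a proof either. What you have written is an honest and largely accurate research discussion: you identify a natural strengthening (prepending $A_{n-1}(x)$ to the interlacing sequence from Corollary~\ref{cor:hPi(x)}), correctly observe that it fails already at $n=4$, and then sketch several possible avenues without carrying any of them to a conclusion. This is appropriate given that the statement is an open problem, but it should not be presented as a proof attempt; it is a commentary on why the conjecture resists the methods of Section~\ref{sec:Pns}. One minor remark on your counterexample: beyond the root comparison you give, there is already a degree obstruction, since $A_3(x)$ has degree~$2$ while $h_{4,3}(x)=h_{\Pi_3}(x)$ has degree~$1$, so the interlacing relation $A_3 \preceq h_{4,3}$ is ill-posed under the paper's convention in any case. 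The substantive point---that the obvious inductive enhancement of Corollary~\ref{cor:hPi(x)} does not go through---remains valid.
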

	
\subsection{The partition lattice of type $B$.}
\label{sec:PBn}

We find it convenient to define $\Pi^B_n$ as the set 
of all partitions $\pi$ of $\{-n,-n+1,\dots,n\}$ with 
the following properties: 
\begin{itemize}
\itemsep=0pt
\item[{\rm (i)}]
$B \in \pi \Rightarrow (-B) \in \pi$, 

\item[{\rm (ii)}]
if $\{i,-i\} \subseteq B$ for some $i \in [n]$ and 
some block $B \in \pi$, then $0 \in B$.
\end{itemize}
For example, $\{ \{0, 2, -2\}, \{1, -3, 5\}, \{-1, 3, -5\}, 
\{4\}, \{-4\} \} \in \Pi^B_5$. The partial order on 
$\Pi^B_n$ is again reverse refinement. 
The unique block of $\pi \in \Pi^B_n$ containing 
0 is called the \emph{zero block}. The poset $\Pi^B_n$ is 
isomorphic to the intersection lattice of the Coxeter 
hyperplane arrangement of type $B_n$ and therefore it 
is a geometric lattice of rank $n$. 

The proof of Theorem~\ref{thm:mainA} for the lattice
$\Pi^B_n$ parallels that for $\Pi_n$. One can easily 
verify that $\Pi^B_n$ has exactly $(n!)^2$ maximal 
chains. We consider the multiset
\[ \bB_n \ := \ \{1\} \times \{1, 1, 1, 2\} \times 
   \{1, 1, 1, 1, 1, 2, 2, 2, 3\} \times \cdots \times 
	 \{1, 1,\dots,1,\dots,n-1, n-1, n-1, n\} , \]
where the $k$th factor has $2k-2i+1$ elements equal 
to $i$, for $1 \le i \le k$. We define the descent 
set $\Des(\sigma) \subseteq [n-1]$ and its cardinality 
$\des(\sigma)$ for $\sigma \in \bB_n$ just as for 
elements of $\aA_n$. We note that $\bB_n$ 
has $(n!)^2$ elements, one of which has empty descent 
set and $(2n-1)!!$ of which have descent set equal to 
$[n-1]$. These facts agree with the following 
analogue of Proposition~\ref{prop:betaPi}.

\begin{proposition} \label{prop:betaPiB} 
For every $n \ge 1$ and every $S \subseteq [n-1]$,
the number $\beta_{\Pi^B_n}(S)$ is equal to the 
number of elements of the multiset $\bB_n$ with 
descent set equal to $n-S$. In particular,
\begin{equation} \label{eq:hPiB(x)}
h_{\Pi^B_n}(x) \ = \, \sum_{\sigma \in \bB_n} 
x^{\des(\sigma)} 
\end{equation}
for every $n \ge 1$.
\end{proposition}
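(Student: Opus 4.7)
The plan is to adapt the proof of Proposition \ref{prop:betaPi} to the type $B$ setting, with the same three ingredients: a strict $R$-labeling of $\Pi^B_n$, a bijection between maximal chains and a Cartesian product encoding the cover choices at each rank, and a reversal bijection relating $\bB_n$ to a ``bottom-to-top'' indexed multiset $\bB^\ast_n$.

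For the labeling, I will use the natural type $B$ analogue of Gessel's labeling. Given a covering relation $(x, y) \in \eE(\Pi^B_n)$, list the non-zero block pairs of $x$ as $\{B_1, -B_1\}, \ldots, \{B_k, -B_k\}$ with $m_i := \min(B_i \cap [n])$ strictly increasing in $i$. Then $y$ arises from $x$ in one of two ways: either by merging the $i$th and $j$th pairs for some $1 \le i < j \le k$ (in one of the two sign patterns), or by absorbing the $j$th pair into the zero block; in either case set $\lambda^B(x, y) = m_j$. Standard arguments (cf.\ \cite[Section~3.2]{Wa07}) show that $\lambda^B$ is a strict EL-labeling, so $\beta_{\Pi^B_n}(S)$ equals the number of maximal chains $C$ with $\Des_{\lambda^B}(C) = S$.

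For the bijection, I record each covering $(x, y)$ by $\psi^B(x, y) = j$ together with finer data $\varphi^B(x, y)$ identifying the type of cover (merge with some $i < j$ and a sign choice, or absorption). Extending componentwise to maximal chains yields a bijection $\widetilde\varphi^B \colon \mM(\Pi^B_n) \to \prod_{r=0}^{n-1} T_r$, where $T_r$ has cardinality $(n - r)^2$ (totalling $(n!)^2$), and a projection $\widetilde\psi^B \colon \mM(\Pi^B_n) \to \bB^\ast_n$, with $\bB^\ast_n$ the multiset whose $r$th factor from the left consists of the values $j \in \{1, \ldots, n - r + 1\}$ with $j$ appearing $2j - 1$ times (from $j - 1$ choices of smaller-index pair in a merge, each with two sign options, plus one absorption). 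The key identity $\Des_{\lambda^B}(C) = \Des^\ast(\widetilde\psi^B(C))$ (with $\Des^\ast$ denoting the strict-descent set) reduces to a local check for consecutive coverings $(x, y)$ and $(y, z)$ with $\psi^B(x, y) = j$: whether $(x, y)$ is a merge or an absorption, the first $j - 1$ non-zero pairs of $x$ descend unchanged to the first $j - 1$ non-zero pairs of $y$, so $\lambda^B(y, z) < m_j$ iff $\psi^B(y, z) \le j - 1$, which (since the $m_i$ are distinct) yields the equivalence $\lambda^B(x, y) \ge \lambda^B(y, z) \iff \psi^B(x, y) > \psi^B(y, z)$.

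Finally, the reversal $(\sigma_1, \ldots, \sigma_n) \mapsto ((n + 1) - \sigma_n, n - \sigma_{n-1}, \ldots, 2 - \sigma_1)$ is a bijection $\bB_n \to \bB^\ast_n$: the multiplicity $2k - 2i + 1$ of value $i$ in the $k$th factor of $\bB_n$ matches the multiplicity $2j - 1$ of value $j = k + 1 - i$ in the $(n + 1 - k)$th factor of $\bB^\ast_n$. A direct computation shows that this reversal sends a weak descent of $\sigma \in \bB_n$ at position $s$ to a strict descent of its image at position $n - s$. Combining these steps, $\beta_{\Pi^B_n}(S)$ equals the number of $\sigma \in \bB_n$ with $\Des(\sigma) = n - S$, and the formula for $h_{\Pi^B_n}(x)$ follows from Equation (\ref{eq:hL(x)}) together with $|n - S| = |S|$. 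I expect the main obstacle to be carrying out the descent-identity verification cleanly across the four possible combinations of merge versus absorption for consecutive coverings; the critical fact that streamlines all four cases is that absorption of the $j$th pair into the zero block, no less than merging it with some pair $i < j$, preserves the indices of the first $j - 1$ pairs.
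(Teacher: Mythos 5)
Your proposal is correct and follows essentially the same route as the paper's proof: the same type $B$ Gessel labeling, the same bijection $\widetilde{\varphi}$ recording the merged pair indices (with sign/absorption data) and its projection $\widetilde{\psi}$ onto the multiset $\bB^\ast_n$, the same local descent equivalence $\lambda(x,y)\ge\lambda(y,z)\Leftrightarrow\psi(x,y)>\psi(y,z)$, and the same reversal bijection $\bB_n\to\bB^\ast_n$ converting weak to strict descents and $S$ to $n-S$.
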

		
The proof uses the following analogue of Gessel's
edge labeling for $\Pi_n$. Let us denote by $|B|$ 
the set of absolute values of the elements of a set
$B \subseteq \ZZ$. Given a covering relation 
$(x, y) \in \eE(\Pi^B_n)$, there exists a unique 
pair $\{B, B'\}$ of distinct blocks of $x$ such that 
\begin{itemize}
\itemsep=0pt
\item[{\rm (i)}]
$\min(|B|) \in B$ and $\min(|B'|) \in B'$, 

\item[{\rm (ii)}]
either $B$ and $B'$ are nonzero and $B \cup B'$ or 
$(-B) \cup B'$ is a nonzero block of $y$, or one of 
$B$ and $B'$ is the zero block of $x$ and $B \cup B'$ 
is contained in the zero block of $y$.
\end{itemize}
We then define $\lambda(x, y)$ as the maximum 
of $\min(|B|)$ and $\min(|B'|)$ and leave to the
reader to verify that the resulting map $\lambda: 
\eE(\Pi^B_n) \to \{1, 2,\dots,n\}$ is a strict 
$R$-labeling. In particular, 
$\beta_{\Pi^B_n}(S)$ is equal to the number of maximal 
chains $C$ of $\Pi^B_n$ such that $\Des_\lambda(C) = S$, 
for every $S \subseteq [n-1]$.

\medskip
\noindent
\emph{Proof of Proposition~\ref{prop:betaPiB}}.
We adapt the proof of Proposition~\ref{prop:betaPi}
as follows. We consider the multiset
\[ \bB^\ast_n \ := \ 
   \{1, 2, 2, 2, 3, 3, 3, 3, 3,\dots,n, n,\dots,n\} 
	 \times \cdots \times \{1, 2, 2, 2, 3, 3, 3, 3, 3\} 
	 \times \{1, 2, 2, 2\} \times \{1\} , \]
where the $k$th factor has $2i-1$ elements equal 
to $i$, for $i \in [n-k+1]$. We set $\Des^\ast
(\sigma) = \{ i \in [n-1]: \sigma_i > \sigma_{i+1} \}$ 
for $\sigma  = (\sigma_1, \sigma_2,\dots,\sigma_n) \in 
\bB^\ast_n$. The bijection $\bB_n \mapsto \bB^\ast_n$ 
defined by 
\[ (\sigma_1, \sigma_2,\dots,\sigma_n) \ \mapsto \ 
   (n+1-\sigma_n, n-\sigma_{n-1},\dots,2-\sigma_1) \]
shows that, for every $S \subseteq 
[n-1]$, the number of elements $\sigma \in \bB^\ast_n$ 
with $\Des^\ast(\sigma) = S$ is equal to the number 
of elements $\sigma \in \bB_n$ with $\Des(\sigma) = 
n-S$. Thus, it suffices to show that the former
equals $\beta_{\Pi^B_n}(S)$.

Let us call a nonzero block $B$ of a partition 
$x \in \Pi^B_n$ \emph{positive} if $\min(|B|) \in B$. 
Given a covering relation $(x, y) \in \eE(\Pi^B_n)$, 
we let $B_0$ be the zero block of $x$ and list its  
positive blocks, say $B_1, B_2,\dots,B_k$, so that 
$\min(|B_1|) < \min(|B_2|) < \cdots < \min(|B_k|)$. 
Then, some positive block $B_j$ merges in $y$ either
with $B_0$, or with $B_i$ or $-B_i$ for some $1 \le 
i < j$. We set $\varphi(x, y) = (j, j)$ or $(i, j)$
or $(j, i)$ in these cases, respectively, and 
$\psi(x, y) = j$. For a maximal chain $C: \hat{0} = 
c_0 \prec c_1 \prec \cdots \prec c_n = \hat{1}$ of 
$\Pi^B_n$ we set 
\begin{eqnarray*}
\widetilde{\varphi}(C) & = & (\varphi(c_0,c_1), 
\varphi(c_1,c_2),\dots,\varphi(c_{n-1},c_n)) , \\ 
\widetilde{\psi}(C) & = & (\psi(c_0,c_1), 
          \psi(c_1,c_2),\dots,\psi(c_{n-1},c_n))
\end{eqnarray*}
and consider the resulting maps 
\begin{eqnarray*}
\widetilde{\varphi} : & \mM(\Pi^B_n) \ \to & 
[n]^2 \times [n-1]^2 \times \cdots \times [1]^2 , \\ 
\widetilde{\psi} : & \mM(\Pi^B_n) \ \to & 
[n] \times [n-1] \times \cdots \times [1] .
\end{eqnarray*}

\smallskip
We note that $\widetilde{\varphi}$ is a bijection,
verify that $\Des_\lambda(C) = \Des^\ast
(\widetilde{\psi}(C))$ for every $C \in \mM(\Pi^B_n)$, 
just as in the proof of Proposition~\ref{prop:betaPi}, 
and conclude that $\beta_{\Pi^B_n}(S)$ is
equal to the number of elements $\sigma \in \bB^\ast_n$ 
with $\Des^\ast(\sigma) = S$ for every $S \subseteq 
[n-1]$. \qed

\medskip
For small values of $n$,
\[  h_{\Pi^B_n}(x) \ = \ \begin{cases}
    1, & \text{if $n = 1$} \\
    1+3x, & \text{if $n = 2$} \\
    1 + 20x + 15x^2, & \text{if $n=3$} \\
    1 + 111x + 359x^2 + 105x^3, & 
		                  \text{if $n=4$} \\
    1 + 642x + 5978x^2 + 6834x^3 + 945x^4, & 
		                  \text{if $n=5$} \\
    1 + 4081x + 92476x^2 + 268236x^3 + 143211x^4 
		         + 10395x^5, & \text{if $n=6$}.
    \end{cases}  \]
		
\begin{corollary} \label{cor:hPiB(x)}
The polynomial $h_{\Pi^B_n}(x)$ is real-rooted and 
it interlaces $h_{\Pi^B_{n+1}}(x)$ for every $n \ge 1$.
\end{corollary}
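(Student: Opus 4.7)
The plan is to adapt the proof of Corollary~\ref{cor:hPi(x)} to the type $B$ setting, exploiting the product structure of the multiset $\bB_n$ in exactly the same way that Corollary~\ref{cor:hPi(x)} exploits the structure of $\aA_n$. For each $k \in [n]$ I would define
\[ h_{n,k}(x) \ = \sum_{\sigma \in \bB_{n,k}} x^{\des(\sigma)} , \]
where $\bB_{n,k}$ consists of all $\sigma = (\sigma_1, \sigma_2,\dots,\sigma_n) \in \bB_n$ with $\sigma_n = k$. By Equation~(\ref{eq:hPiB(x)}) one then has $h_{\Pi^B_n}(x) = \sum_{k=1}^n h_{n,k}(x)$.

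The central step is to establish the recurrence
\[ h_{n+1,k}(x) \ = \ (2n - 2k + 3) \left( \, \sum_{i=1}^{k-1} h_{n,i}(x) \, + \, x \sum_{i=k}^n h_{n,i}(x) \right) \]
for $k \in [n+1]$. Every element $\sigma \in \bB_{n+1,k}$ decomposes as a word $(\sigma_1,\dots,\sigma_n) \in \bB_n$ followed by the entry $\sigma_{n+1} = k$; the factor $2n - 2k + 3$ records the multiplicity of $k$ in the $(n+1)$-st factor of $\bB_{n+1}$, and the two sums inside the parentheses account for the cases $\sigma_n < k$ (no descent at position $n$) and $\sigma_n \ge k$ (descent at position $n$), respectively. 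In particular, taking $k = n+1$ gives the key identity $h_{n+1,n+1}(x) = h_{\Pi^B_n}(x)$, since the trailing sum is empty and the prefactor equals $1$.

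Once the recurrence is in hand, the remainder of the argument is formal and runs in complete parallel with the proof of Corollary~\ref{cor:hPi(x)}. Because rescaling by positive constants preserves interlacing, an induction on $n$ using part (b) of Lemma~\ref{lem:int-seq} shows that the sequence $(h_{n,n}(x), h_{n,n-1}(x),\dots,h_{n,1}(x))$ is interlacing for every $n \ge 1$. Part (a) of the same lemma then yields that $h_{\Pi^B_{n+1}}(x) = \sum_{k=1}^{n+1} h_{n+1,k}(x)$ is real-rooted and is interlaced by the leading term $h_{n+1,n+1}(x) = h_{\Pi^B_n}(x)$, which is precisely the statement of the corollary.

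The only substantive step is the combinatorial verification of the recurrence, which amounts to a careful reading of the definition of $\bB_n$ and of the descent statistic; this is where I would expect to spend the most care. Everything after that is a direct transcription of the type $A$ argument, and no new conceptual obstacle is anticipated.
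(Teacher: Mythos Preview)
Your proposal is correct and follows essentially the same approach as the paper: the paper defines the polynomials $h^B_{n,k}(x)$ exactly as you do, records the identical recurrence with prefactor $2n-2k+3$, notes the identity $h^B_{n+1,n+1}(x) = h_{\Pi^B_n}(x)$, and then refers back to the proof of Corollary~\ref{cor:hPi(x)} for the interlacing argument via Lemma~\ref{lem:int-seq}. There is no substantive difference between your outline and the paper's proof.
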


\begin{proof}
For $1 \le k \le n$ we set 
\[ h^B_{n,k}(x) \ = \sum_{\sigma \in \bB_{n,k}} 
   x^{\des(\sigma)} , \]
where $\bB_{n,k}$ is the multiset consisting of 
all words $(\sigma_1, \sigma_2,\dots,\sigma_n) \in 
\bB_n$ with $\sigma_n = k$. 
By Proposition~\ref{prop:betaPiB}, 
\[ h_{\Pi^B_n}(x) \ = \ h^B_{n+1,n+1}(x) \ = \ 
                      \sum_{k=1}^n h^B_{n,k}(x) \]
for every $n \ge 1$ and 
\[ h^B_{n+1,k}(x) \ = \ (2n-2k+3) \left( \,
   \sum_{i=1}^{k-1} h^B_{n,i}(x) \, + \, x 
	 \sum_{i=k}^n h^B_{n,i}(x) \right) \]
for $k \in [n+1]$. The result follows from these 
formulas as in the proof 
of Corollary~\ref{cor:hPi(x)}. 
\end{proof}


The intersection lattice of the Coxeter hyperplane 
arrangement of type $D_n$ is isomorphic to the 
subposet $\Pi^D_n$ of $\Pi^B_n$ which consists of 
all elements of the latter with zero block not of
the form $\{0, i, -i\}$ for any $i \in [n]$. The 
number of maximal chains of $\Pi^D_n$ can be shown 
to equal $(n!)^2/2$ for every $n \ge 2$. For small 
values of $n$,
\[  h_{\Pi^D_n}(x) \ = \ \begin{cases}
    1+x, & \text{if $n = 2$} \\
    1 + 11x + 6x^2, & \text{if $n=3$} \\
    1 + 67x + 175x^2 + 45x^3, & 
		                  \text{if $n=4$} \\
    1 + 397x + 3143x^2 + 3239x^3 + 420x^4, & 
		                  \text{if $n=5$} \\
    1 + 2539x + 50272x^2 + 134160x^3 + 67503x^4 
		         + 4725x^5, & \text{if $n=6$}.
    \end{cases} \]
		
We leave the analogue of Propositions~\ref{prop:betaPi} 
and~\ref{prop:betaPiB} for $\Pi^D_n$ open in this paper. 
Computational data suggest that the polynomial 
$h_{\Pi^D_n}(x)$ is 
real-rooted for every $n \ge 2$ and that the sequence 
$(h_{\Pi_{n+1}}(x), h_{\Pi^D_n}(x), h_{\Pi^B_n}(x))$ 
is interlacing for every $n \ge 3$.

\section{Geometric lattices and face lattices of 
polytopes} 
\label{sec:gg}

This section addresses the question of 
real-rootedness of $h_\lL(x)$ for geometric lattices, 
face lattices of polytopes and other classes
of posets. Moreover, it discusses connections among 
these questions, proves Theorem~\ref{thm:mainB} and 
deduces some partial results from that.  

We recall that a finite lattice is called geometric 
if it is atomic and semimodular (see 
\cite[Section~3.3]{StaEC1} for explanations and more 
information) or, equivalently, if it is isomorphic to 
the lattice of flats of a matroid. For a geometric 
lattice $\lL$, the possible inequalities among the 
coefficients of $h_\lL(x)$ were studied by Nyman and 
Swartz~\cite{NS04}, who showed that 
\begin{itemize}
\itemsep=0pt
\item[$\bullet$]
$a_0 \le a_1 \le \cdots 
\le a_{\lfloor (n-1)/2 \rfloor}$, and
\item[$\bullet$]
$a_i \le a_{n-1-i}$ for $0 \le i \le (n-1)/2$ 
\end{itemize}
for every geometric lattice $\lL$ of rank $n$, where
$h_\lL(x) = \sum_{i=0}^{n-1} a_i x^i$ (later, these 
inequalities were extended to the $h$-polynomials of 
$(n-1)$-dimensional simplicial complexes having a 
convex ear decomposition in~\cite{Sw06} and, more 
recently, to the $h$-polynomials of all 
$(n-1)$-dimensional doubly Cohen--Macaulay simplicial 
complexes in~\cite[Section~6]{APP21}). To the best 
of our knowledge, the unimodality of $h_\lL(x)$ is 
open. The following conjecture, which has been verified 
computationally for all geometric lattices with at 
most nine atoms, is a much stronger statement.
\begin{conjecture} 
The polynomial $h_\lL(x)$ has only real roots and
is interlaced by the Eulerian polynomial $A_n(x)$ for 
every geometric lattice $\lL$ of rank $n$.
\label{conj:geom}
\end{conjecture}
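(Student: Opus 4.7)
The plan is to prove Conjecture~\ref{conj:geom} by induction on the rank $n$, emulating the edge-labeling strategy that succeeded for $\Pi_n$, $\Pi^B_n$ and $\lL_n(q)$. First, I would fix a linear order $a_1 < a_2 < \cdots < a_m$ on the atoms of $\lL$ and invoke Bj\"orner's EL-labeling
\[ \lambda(x, y) \ = \ \min \{ i : x \vee a_i = y \}, \]
which is a strict $R$-labeling by the semimodularity of $\lL$; hence $\beta_\lL(S)$ enumerates maximal chains $C$ of $\lL$ with $\Des_\lambda(C) = S$. For each $k \in [m]$, let $h_{\lL,k}(x)$ denote the descent-enumerator restricted to maximal chains whose initial covering relation has label $k$, or equivalently, whose atom at rank one is $a_k$. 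Then $h_\lL(x) = \sum_k h_{\lL,k}(x)$.

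The heart of the argument would be a recursion expressing each $h_{\lL,k}(x)$ in terms of the refined polynomials of the upper interval $[a_k, \hat{1}]$, which is itself a geometric lattice of rank $n-1$; this is the key structural closure property of geometric lattices that makes induction plausible. If one can arrange the recursion so that $h_{\lL,k}(x)$ is obtained from the inductive data through the operation described in Lemma~\ref{lem:int-seq}(b) --- weighted by $x$ precisely at transitions that create a descent in $\lambda$ --- then Lemma~\ref{lem:int-seq} would guarantee that $(h_{\lL,1}(x), h_{\lL,2}(x), \ldots, h_{\lL,m}(x))$ is interlacing, and consequently that $h_\lL(x)$ is real-rooted and interlaced by $h_{\lL,1}(x) = h_{[a_1,\hat{1}]}(x)$. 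The interlacing bound by $A_n(x)$ would follow from a separate comparison: one would construct a descent-preserving map from $\mM(\lL)$ onto a multiset generalizing $\aA_n$, specializing to the permutation description of the Boolean lattice when $\lL = \lL_n$, and conclude via part (a) of Lemma~\ref{lem:int-seq} after verifying that $h_\lL(x)$ is a nonnegative combination of polynomials in an interlacing family whose top member is $A_n(x)$.

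The hard part is that in the partition-lattice proofs the refined recursion carries a \emph{uniform} multiplier ($n+1-k$ in type $A$, $2n-2k+3$ in type $B$) that reflects a constant number of covering relations above any flat of fixed rank; this uniformity is exactly what fails for a general geometric lattice, where upper intervals at a given rank are nonisomorphic and have varying numbers of atoms, so the refined polynomials no longer assemble into an interlacing family through positive linear combinations in a uniform way. A second difficulty is that the polynomials $h_{\lL,k}(x)$ depend on the chosen linear order on the atoms, and identifying an ordering for which the interlacing recursion closes is itself a substantive combinatorial problem. A complementary approach worth pursuing, suggested by Proposition~\ref{prop:sd(Z)}, is to transfer the question to face lattices of zonotopes via Theorem~\ref{thm:BER} and attack it there, where the richer geometric structure --- shellings by sweeps, together with the $\Pyr$ and $\Prism$ operations of Theorem~\ref{thm:mainB} --- may furnish the recursion that is missing in the purely matroidal setting.
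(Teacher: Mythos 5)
This statement is left as an open conjecture in the paper: the authors prove it only for special families (subspace lattices, the partition lattices $\Pi_n$ and $\Pi^B_n$, near-pencils and uniform matroids) and otherwise report computational verification for geometric lattices with at most nine atoms. There is therefore no proof in the paper to compare yours against, and your proposal does not close the gap either. Your sketch accurately reproduces the template of the proofs of Propositions~\ref{prop:subspace}, \ref{prop:betaPi} and Corollaries~\ref{cor:hPi(x)}, \ref{cor:hPiB(x)} --- a strict $R$-labeling, a refinement of $h_\lL(x)$ by the first (or last) label, and an appeal to Lemma~\ref{lem:int-seq}(b) --- but the two obstacles you name yourself are exactly where the argument breaks down and are the reason the general case remains open. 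The recursion in Lemma~\ref{lem:int-seq}(b) requires that each $g_k$ be built from the \emph{same} interlacing sequence $(f_0,\dots,f_m)$ with the split point varying; for $\Pi_n$ this works because the refined polynomials $h_{n+1,k}(x)$ differ from a common expression only by the scalar $n+1-k$, which can be normalized away without affecting interlacing. For a general geometric lattice the upper intervals $[a_k,\hat 1]$ are pairwise nonisomorphic, so the inductive hypothesis hands you interlacing sequences living in different lattices, and there is no mechanism for merging them: interlacing is not preserved under arbitrary sums of real-rooted polynomials, so ``each $h_{[a_k,\hat 1]}(x)$ is real-rooted'' gives you essentially nothing about $\sum_k h_{\lL,k}(x)$. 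Likewise, the claimed descent-preserving surjection onto a multiset generalizing $\aA_n$ is not constructed, and the interlacing by $A_n(x)$ would not follow from Lemma~\ref{lem:int-seq}(a) without exhibiting $h_\lL(x)$ as a nonnegative combination of a \emph{specific} interlacing family summing to $A_n(x)$ (as is done for uniform matroids via the polynomials $p_{n-1,k}(x)$ in Proposition~\ref{prop:pencil-uniform}, using the extra structure of a simplicial poset). In short, what you have is a reasonable research program, not a proof; the conjecture should be cited as such.
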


The question of real-rootedness of $h_\lL(x)$
was raised by Brenti and Welker~\cite[Question~1]{BW08} 
for face lattices of convex polytopes and (in a 
stronger form) by Athanasiadis and 
Tzanaki~\cite[Question~7.4]{AT21} for face lattices 
of more general classes of polyhedral complexes, 
including polyhedral balls and doubly Cohen--Macaulay 
polyhedral complexes. It seems natural to pose the
following even more general question. We recall that a 
finite poset $\lL$, having a minimum element $\hat{0}$, 
is said to be \emph{lower Eulerian} if the closed 
interval $[\hat{0}, x]$ in $\lL$ is Eulerian (see 
\cite[Section~3.16]{StaEC1} for the definition and 
information about Eulerian posets) for every 
$x \in \lL$.
\begin{question} 
Does $h_\lL(x)$ have only real roots for every lower 
Eulerian Cohen--Macaulay poset $\lL$?
\label{que:LECM}
\end{question}

Special classes of lower Eulerian Cohen--Macaulay 
posets for which it would be interesting to 
investigate this question include:
\begin{itemize}
\itemsep=0pt
\item[$\bullet$]
lower Eulerian Cohen--Macaulay meet semi-lattices,

\item[$\bullet$]
face posets of Cohen--Macaulay regular cell complexes,

\item[$\bullet$]
face posets of Cohen--Macaulay polyhedral complexes,

\item[$\bullet$]
Gorenstein* posets,

\item[$\bullet$]
Gorenstein* lattices,

\item[$\bullet$]
face lattices of convex polytopes 
\cite[Question~1]{BW08},

\item[$\bullet$]
face lattices of zonotopes.
\end{itemize}

\smallskip
Given Equation~(\ref{eq:hL(x)}), the following statement 
suggests that Conjecture~\ref{conj:geom} may be closely 
related to the special case of Question~\ref{que:LECM}  
concerning face lattices of zonotopes (more generally, 
of oriented matroids). We recall that $\fF(\pP)$ denotes 
the face lattice of a polytope $\pP$. For $S \subseteq 
[n-1]$, we denote by $\lpeak(S)$ the number of elements 
$i \in S$ for which $i-1 \not\in S$, known 
\cite[p.~298]{Pet15} as the \emph{left peaks} of the 
permutation $w \in \fS_n$ when $S = \Des(w)$. 
\begin{proposition} \label{prop:sd(Z)} 
For every $n$-dimensional zonotope $\zZ$ 
\begin{equation} \label{eq:hsd(Z)}
h_{\fF(\zZ)}(x) \ = \sum_{S \subseteq [n-1]} 
   \beta_\lL(S) \, (4x)^{\lpeak(S)} 
	 (1+x)^{n - 2\lpeak(S)} , 
\end{equation}	
where $\lL$ is the lattice of flats of the matroid 
associated to $\zZ$. In particular, the polynomial 
$h_{\fF(\zZ)}(x)$ has only real roots if and only if so 
does $\sum_{S \subseteq [n-1]} \beta_\lL(S) 
x^{\lpeak(S)}$.
\end{proposition}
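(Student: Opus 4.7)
The plan is to apply Theorem~\ref{thm:BER} and track how the operator $\omega$ acts on each $\ba\bb$-monomial appearing in $\ba \cdot \Psi_\lL$. Writing $\Psi_\lL = \sum_{S \subseteq [n-1]} \beta_\lL(S) \bu_S$, Theorem~\ref{thm:BER} gives
\[
\Psi_{\fF(\zZ)} \ = \ \sum_{S \subseteq [n-1]} \beta_\lL(S) \, \omega(\ba \cdot \bu_S).
\]
For each fixed $S$, I would view $\ba \cdot \bu_S$ as a word $v_0 v_1 \cdots v_{n-1}$ with $v_0 = \ba$ and, for $i \ge 1$, $v_i = \bb$ if and only if $i \in S$. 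An occurrence of the substring $\ba\bb$ starting at index $i$ requires $v_i = \ba$ and $v_{i+1} = \bb$, which happens either when $i = 0$ and $1 \in S$, or when $i \ge 1$, $i \notin S$ and $i+1 \in S$. In both cases this is equivalent to $i+1$ being a left peak of $S$ (the case $i=0$ being included because $0 \notin S$ is automatic). Hence the number of $\ba\bb$ substrings of $\ba \cdot \bu_S$ equals $\lpeak(S)$, and these occurrences are pairwise disjoint since $\ba\bb$ has distinct letters.

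Applying $\omega$ then replaces each such occurrence by $2(\ba\bb + \bb\ba)$ and each of the remaining $n - 2\lpeak(S)$ letters by $\ba + \bb$. Specializing $\ba = 1, \bb = x$ (which yields $h_{\fF(\zZ)}(x) = \Psi_{\fF(\zZ)}(1,x)$) sends $2(\ba\bb+\bb\ba)$ to $4x$ and $\ba + \bb$ to $1+x$, giving formula~(\ref{eq:hsd(Z)}).

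For the equivalence of real-rootedness, I would rewrite~(\ref{eq:hsd(Z)}) as
\[
h_{\fF(\zZ)}(x) \ = \ (1+x)^n \, g \! \left( \frac{4x}{(1+x)^2} \right),
\]
where $g(y) := \sum_{S \subseteq [n-1]} \beta_\lL(S) \, y^{\lpeak(S)}$, and use that the map $x \mapsto 4x/(1+x)^2$ is a $2$-to-$1$ surjection from $\RR \sm \{-1\}$ onto $(-\infty, 1]$. Equivalently, for $y_0 \in \RR$ the equation $4x - y_0(1+x)^2 = 0$ has two real solutions if and only if $y_0 \le 1$. Consequently, $h_{\fF(\zZ)}(x)$ has only real roots if and only if every root of $g$ is real and at most $1$. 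Since $g$ has nonnegative coefficients with $g(0) = \beta_\lL(\emptyset) = 1$, real-rootedness of $g$ forces all its roots to lie in $(-\infty, 0]$, so the two real-rootedness conditions are equivalent.

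The main obstacle is the bookkeeping identifying $\ba\bb$-substrings of $\ba \cdot \bu_S$ with left peaks of $S$; the subtlety is that the prepended $\ba$ allows a left peak at $i=1$ to contribute via the pair $(v_0, v_1) = (\ba, \bb)$, which requires careful indexing. Once this is handled, the remainder of the proof is a direct specialization followed by a standard substitution argument for the real-rootedness equivalence.
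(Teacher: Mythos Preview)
Your argument is correct and, for the main formula~(\ref{eq:hsd(Z)}), proceeds exactly as the paper does: identify the $\ba\bb$-substrings of $\ba\cdot\bu_S$ with the left peaks of $S$ and then specialize $\ba=1$, $\bb=x$ in Theorem~\ref{thm:BER}. For the real-rootedness equivalence the paper simply invokes the standard fact (citing Gal) that a $\gamma$-positive polynomial $\sum_i \gamma_i x^i(1+x)^{n-2i}$ is real-rooted if and only if $\sum_i \gamma_i x^i$ is, whereas you supply a direct proof of that fact via the substitution $y=4x/(1+x)^2$; this makes your version self-contained at the cost of a short extra paragraph, but the content is the same.
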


\begin{proof}
The number of occurrences of $\ba \bb$
in a word $\ba \cdot w$ of degree $n$ in $\ba$ and 
$\bb$ is equal to $\lpeak(S)$, where $S$ is the 
subset of $[n-1]$ associated to $w$. Thus, the first 
statement follows by substituting $\ba = 1$ and 
$\bb = x$ in the formula $\Psi_{\fF(\zZ)} = \omega 
(\ba \cdot \Psi_\lL)$ of Theorem~\ref{thm:BER}. 
The second statement follows from the first and the 
fact (see \cite[Remark~3.1.1]{Ga05}) that a 
$\gamma$-positive polynomial
$\sum_{i=0}^{\lfloor n/2 \rfloor} \gamma_i x^i 
(1+x)^{n-2i}$ is real-rooted if and only if so is 
the associated $\gamma$-polynomial 
$\sum_{i=0}^{\lfloor n/2 \rfloor} \gamma_i x^i$.
\end{proof}

\begin{remark} \label{cor:sd(Z)} \rm
Let $\zZ$ and $\lL$ be as in 
Proposition~\ref{prop:sd(Z)}. Setting $x=1$ in 
Equation~(\ref{eq:hsd(Z)}) shows that the number of 
facets of the barycentric subdivision of $\zZ$ is 
equal to $2^n$ times the number of maximal chains 
of $\lL$.
\qed
\end{remark}

The main result of this section, which is a stronger 
version of Theorem~\ref{thm:mainB}, allows one to 
construct new posets with real-rooted chain
polynomials from posets known to have this property.
\begin{theorem} \label{thm:pp} 
Let $\lL$ be a bounded, graded poset of positive 
rank $n$. 
\begin{itemize}
\itemsep=0pt
\item[(a)]
We have 
\begin{eqnarray}
h_{\Pyr(\lL)}(x) & = & (1+nx) h_\lL(x) + (x - x^2) 
h^\prime_\lL(x) , \label{eq:hpyr} \\ 
h_{\Prism(\lL)}(x) & = & (1+(2n-1)x) h_\lL(x) + 
2(x - x^2) h^\prime_\lL(x) \label{eq:hprism} \\ 
& = & 2 h_{\Pyr(\lL)}(x) - (1+x) h_\lL(x) . 
\nonumber
\end{eqnarray}

\item[(b)]
Assume that $\lL$ is Cohen--Macaulay. If $h_\lL(x)$ 
is real-rooted, then $h_{\Pyr(\lL)}(x)$ and 
$h_{\Prism(\lL)}(x)$ are also real-rooted and each
of them is interlaced by $h_\lL(x)$.
\end{itemize}
\end{theorem}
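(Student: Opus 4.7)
The plan for part~(a) is to apply the evaluation homomorphism $\phi: \ZZ\langle \ba, \bb\rangle \to \ZZ[x]$ defined by $\phi(\ba) = 1$ and $\phi(\bb) = x$ to the Ehrenborg--Readdy identities of Theorem~\ref{thm:ER}, using $\phi(\Psi_\lL) = h_\lL(x)$ from Equation~(\ref{eq:hL(x)}). The products $\Psi_\lL \cdot (\ba + \bb)$ and $(\ba + \bb) \cdot \Psi_\lL$ both evaluate to $(1+x)\, h_\lL(x)$, so the whole computation reduces to evaluating $\phi(D(\Psi_\lL))$. Expanding $D$ as a derivation on a monomial $\bu_S$ of degree $n-1$ and grouping the contributions by whether the differentiated letter is $\ba$ or $\bb$ gives
$$ \phi(D(\bu_S)) \ = \ 2|S|\, x^{|S|} \ + \ 2(n-1-|S|)\, x^{|S|+1}; $$
summing over $S \subseteq [n-1]$ with weights $\beta_\lL(S)$, and using the identity $\sum_S \beta_\lL(S)\, |S|\, x^{|S|} = x\, h'_\lL(x)$, collapses this to $2(n-1)\, x\, h_\lL(x) + 2x(1-x)\, h'_\lL(x)$. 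Substitution into~(\ref{eq:ER1}) and~(\ref{eq:ER2}) yields the two closed-form expressions, and the third form of $h_{\Prism(\lL)}(x)$ follows by subtraction.

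For part~(b), set $f := h_\lL(x)$ and $g := h_{\Pyr(\lL)}(x) = (1+nx)\, f + x(1-x)\, f'$. The Cohen--Macaulay hypothesis forces $f$ to have nonnegative coefficients, and together with real-rootedness this forces all roots of $f$ to be nonpositive; since $f(0) = \beta_\lL(\emptyset) = 1$, the roots are in fact strictly negative. Writing $d := \deg f \le n-1$, the leading coefficient of $g$ equals $(n-d)$ times that of $f$, which is strictly positive, so $\deg g = d+1$. By a standard continuity argument---perturb the roots of $f$ slightly so that they become simple while preserving the nonnegativity of the coefficients, establish strict interlacing for the perturbed polynomials, and pass to the limit using that weak interlacing is closed under limits---it suffices to establish the interlacing of $g$ by $f$ when the roots $0 > \alpha_1 > \alpha_2 > \cdots > \alpha_d$ of $f$ are all distinct.

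In that simple-roots setting, the first term of $g$ vanishes at each $\alpha_i$, giving $g(\alpha_i) = \alpha_i(1-\alpha_i)\, f'(\alpha_i)$; since $\alpha_i(1-\alpha_i) < 0$ and the signs of $f'(\alpha_i)$ alternate as $(-1)^{i-1}$, we obtain $\mathrm{sign}\, g(\alpha_i) = (-1)^i$. Combined with $g(0) = 1 > 0$ and $g(x) \to (-1)^{d+1}\, \infty$ as $x \to -\infty$ (using that $g$ has positive leading coefficient), this produces $d+1$ strict sign changes of $g$ across the sample points $-\infty, \alpha_d, \alpha_{d-1}, \ldots, \alpha_1, 0$, forcing at least one real root of $g$ in each of the $d+1$ open intervals $(-\infty, \alpha_d), (\alpha_d, \alpha_{d-1}), \ldots, (\alpha_1, 0)$. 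Since $\deg g = d+1$, these roots exhaust $g$ and are therefore real, simple, and strictly interlaced by the $\alpha_i$, proving the pyramid statement.

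The prism case goes through identically: $h_{\Prism(\lL)}(x) = (1 + (2n-1)x)\, f + 2x(1-x)\, f'$ has the same structural form, and the same sign-count argument applies, with the inequality $2n-1-2d \ge 1$ (from $d \le n-1$) replacing $n-d \ge 1$ in the degree computation and in the sign at $-\infty$. The main obstacle is the continuity reduction to simple roots: one must perturb $f$ while remaining in the class of real-rooted polynomials with nonnegative coefficients, so that the sign analysis applies to the perturbation, and then verify that the limit yields interlacing in the sense of Section~\ref{sec:pre}. This is a standard maneuver (e.g., by slightly translating the roots of $f$ downward by distinct amounts and letting the shifts tend to zero) but is the only mildly technical step of the proof.
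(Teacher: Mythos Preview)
Your proof of part~(a) is correct and essentially identical to the paper's: both substitute $\ba = 1$, $\bb = x$ into the Ehrenborg--Readdy formulas and compute the effect of $D$ on a monomial $\bu_S$ by counting the letters equal to $\bb$ in each summand.

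For part~(b), your argument is correct but takes a different route from the paper. You evaluate $g$ directly at the (simple) roots $\alpha_i$ of $f$, read off the alternating signs from $g(\alpha_i) = \alpha_i(1-\alpha_i)f'(\alpha_i)$, and count sign changes across $-\infty, \alpha_d,\dots,\alpha_1,0$; a perturbation then handles repeated roots. The paper instead verifies the identity
\[
\frac{h_{\Pyr(\lL)}(x)}{(1-x)^{n+2}} \ = \ \frac{{\rm d}}{{\rm d}x}\!\left(\frac{x\,h_\lL(x)}{(1-x)^{n+1}}\right)
\]
(and an analogous one with $h_\lL(x)^2$ for the prism) and applies Rolle's theorem to the right-hand side, using that $x h_\lL(x)/(1-x)^{n+1}$ vanishes at $0$, at every root of $h_\lL(x)$, and in the limit $x\to-\infty$. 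The two arguments are morally the same intermediate-value count, but the paper's rewriting as an honest derivative handles multiple roots automatically (a zero of multiplicity $m$ of $F$ contributes a zero of multiplicity $m-1$ to $F'$, and Rolle supplies the remaining zeros between distinct ones), so no perturbation step is needed. Your approach, on the other hand, is more elementary in that it avoids discovering the derivative identity; the only cost is the limiting argument you flag at the end, which is indeed routine.
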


\begin{proof}
Part (a) follows by substituting $\ba = 1$ and $\bb = 
x$ in the formulas of Theorem~\ref{thm:ER}. Indeed, 
if $w$ is any word of degree $n-1$ in $\ba$ and $\bb$ 
having $k$ letters equal to $\bb$, then $D(w)$ can be 
expressed as a sum of $2n-2$ words of degree $n$ in 
$\ba$ and $\bb$, of which $2k$ have $k$ letters equal 
to $\bb$ and $2n-2k-2$ have $k+1$ letters equal to 
$\bb$. As a result, Equation~(\ref{eq:ER1}) implies 
that 
\[ h_{\Pyr(\lL)}(x) \ = \ (1+x) h_\lL(x) + 
   \delta_n(h_\lL(x)) , \]
where $\delta_n: \RR_{n-1}[x] \to \RR_n[x]$ is the 
linear map defined by $\delta_n(x^k) = kx^k 
+ (n-k-1)x^{k+1}$ for $0 \le k \le n-1$. Clearly, 
$\delta_n(h(x)) = (n-1)x h(x) + (x-x^2) h^\prime(x)$ 
for every $h(x) \in \RR_{n-1}[x]$ and 
Equation~(\ref{eq:hpyr}) follows. The same argument 
shows that 
\[ h_{\Prism(\lL)}(x) \ = \ (1+x) h_\lL(x) + 2 \, 
   \delta_n(h_\lL(x)) \]
and yields Equation~(\ref{eq:hprism}).

For part (b) we rewrite Equations~(\ref{eq:hpyr}) 
and~(\ref{eq:hprism}) as 

\begin{eqnarray}
\frac{h_{\Pyr(\lL)}(x)}{(1-x)^{n+2}} & = & 
\frac{{\rm d}}{{\rm d}x} 
\left( \, \frac{x h_\lL(x)}{(1-x)^{n+1}} \right) , 
\label{eq:hpyrB} \\ & & \nonumber \\
\frac{h_\lL(x) h_{\Prism(\lL)}(x)}{(1-x)^{2n+3}} 
& = & \frac{{\rm d}}{{\rm d}x} 
\left( \, \frac{x (h_\lL(x))^2}{(1-x)^{2n+2}} 
\right) \label{eq:hprismB} .
\end{eqnarray}

\medskip
\noindent
Since $\lL$ is Cohen--Macaulay, $h_\lL(x)$ has 
nonnegative coefficients. Since the latter is 
assumed to be real-rooted and has constant term 
equal to 1, all its roots are negative. Let $d$
be the degree of $h_\lL(x)$. Then, $d \le n-1$ 
and, as it follows from part (a) and its proof,
$h_{\Pyr(\lL)}(x)$ and $h_{\Prism(\lL)}(x)$ 
have nonnegative coefficients and degree $d+1$.
Applying Rolle's theorem and taking into account 
that
\[ \lim_{x \to -\infty} 
   \frac{x h_\lL(x)}{(1-x)^{n+1}} \ = \ 
	 \lim_{x \to -\infty} 
   \frac{x (h_\lL(x))^2}{(1-x)^{2n+2}} \ = \ 0 \]
we conclude from Equations~(\ref{eq:hpyrB}) 
and~(\ref{eq:hprismB}) that each of 
$h_{\Pyr(\lL)}(x)$ and $h_{\Prism(\lL)}(x)$ has 
$d+1$ negative roots which are interlaced by those 
of $h_\lL(x)$ and the proof follows.
\end{proof}

\begin{corollary} \label{cor:matroid} 
Let $\mM$ and $\mM'$ be matroids with lattices of flats
$\lL$ and $\lL'$, respectively. If $h_\lL(x)$ is 
real-rooted and $\mM'$ is obtained by successively 
adding coloops to $\mM$, then $h_{\lL'}(x)$ is  
real-rooted as well.
\end{corollary}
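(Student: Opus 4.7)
The plan is to reduce the statement to a single application of Theorem~\ref{thm:pp}(b) iterated over the coloops being added. The key identification is that adding a coloop to a matroid corresponds, at the level of lattices of flats, to taking a pyramid.

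More precisely, I would first observe that if $e$ is a coloop of $\mM'$ and $\mM = \mM' \sm e$, then a subset $F \subseteq E(\mM')$ is a flat of $\mM'$ if and only if either $e \notin F$ and $F$ is a flat of $\mM$, or $e \in F$ and $F \sm \{e\}$ is a flat of $\mM$. The map $(F,0) \mapsto F$, $(F,1) \mapsto F \cup \{e\}$ then furnishes an order isomorphism $\lL \times \lL_1 \cong \lL'$, where $\lL_1$ is the two-element chain: inclusions of the form $F \cup \{e\} \subseteq F'$ with $e \notin F'$ are impossible, while all other comparisons reduce to $F \subseteq F'$ in $\lL$. Consequently $\lL' \cong \Pyr(\lL)$.

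Next, I would invoke the fact that every geometric lattice is Cohen--Macaulay (a standard result; see, e.g., \cite[Section~III.3]{StaCCA}), so $\lL$ satisfies the hypotheses of Theorem~\ref{thm:pp}(b). Assuming $h_\lL(x)$ is real-rooted, that theorem then yields that $h_{\Pyr(\lL)}(x) = h_{\lL'}(x)$ is real-rooted as well, and in fact interlaced by $h_\lL(x)$.

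Finally, since by assumption $\mM'$ is obtained from $\mM$ by successively adding coloops, I would conclude by a straightforward induction on the number of coloops added: at each step the current lattice of flats is geometric (hence Cohen--Macaulay), its $h$-polynomial is real-rooted by the inductive hypothesis, and the next step replaces it by its pyramid, whose $h$-polynomial remains real-rooted by the argument above. No step is genuinely hard; the only point that requires care is the order-theoretic verification that adding a coloop corresponds exactly to the pyramid construction, which is an elementary check as indicated.
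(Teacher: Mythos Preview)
Your proof is correct and follows essentially the same route as the paper: identify that adding a coloop to a matroid yields the pyramid of its lattice of flats, then apply Theorem~\ref{thm:pp} and induct. You invoke part~(b) of that theorem, which is indeed the part needed for real-rootedness; the paper's one-line proof cites part~(a)---apparently a slip---but the argument is otherwise the same, and your explicit checks (the flat description under a coloop, the Cohen--Macaulay hypothesis) are all in order.
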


\begin{proof}
This follows from part (a) of Theorem~\ref{thm:pp},
since $\lL'$ is isomorphic to $\Pyr(\lL)$
for every matroid $\mM'$ which can be obtained by 
adding one coloop to $\mM$.
\end{proof}

The following corollary of Theorem~\ref{thm:pp} 
provides classes of nonsimplicial, nonsimple and
noncubical polytopes in any dimension, the 
barycentric subdivisions of which have real-rooted 
$f$-polynomials. For instance, it implies that any 
polytope which is obtained from one of dimension at 
most 5 by applying successively the pyramid or the 
prism construction has this property.
\begin{corollary} \label{cor:polytope} 
Let $\pP$ be a convex polytope. If $h_{\fF(\pP)}
(x)$ is real-rooted, then $h_{\fF(\Pyr(\pP))}(x)$ 
and $h_{\fF(\Prism(\pP))}(x)$ have the same property
and each of them is interlaced by $h_{\fF(\pP)}(x)$.
\end{corollary}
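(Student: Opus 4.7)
The plan is to deduce this corollary directly from Theorem~\ref{thm:pp}(b), applied to $\lL = \fF(\pP)$. The bridge between the combinatorial statement about the operators $\Pyr$ and $\Prism$ on posets and the geometric polytope constructions is already recorded in Section~\ref{sec:pre}: for every convex polytope $\pP$, one has the poset isomorphisms $\fF(\Pyr(\pP)) \cong \Pyr(\fF(\pP))$ and $\fF(\Prism(\pP)) \cong \Prism(\fF(\pP))$. Hence the corollary reduces to checking that $\fF(\pP)$ satisfies the hypotheses of Theorem~\ref{thm:pp}(b), namely that it is a bounded, graded poset of positive rank which is Cohen--Macaulay.

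The first two properties are immediate from the definition of face lattice: $\fF(\pP)$ has $\hat{0} = \varnothing$ and $\hat{1} = \pP$, and it is graded with rank function $x \mapsto \dim(x) + 1$. The Cohen--Macaulay property is a standard consequence of the fact that the order complex $\Delta(\overline{\fF(\pP)})$ is the barycentric subdivision of the boundary complex $\partial \pP$, which is a PL sphere and therefore Cohen--Macaulay over $\QQ$ (see, e.g., \cite[Section~II.3]{StaCCA}). I would cite this fact rather than reprove it.

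Given these ingredients, invoking Theorem~\ref{thm:pp}(b) with $\lL = \fF(\pP)$ yields both the real-rootedness of $h_{\Pyr(\fF(\pP))}(x)$ and $h_{\Prism(\fF(\pP))}(x)$ and the interlacing by $h_{\fF(\pP)}(x)$; translating through the isomorphisms above then gives the stated conclusion about $h_{\fF(\Pyr(\pP))}(x)$ and $h_{\fF(\Prism(\pP))}(x)$. No genuine obstacle arises at this stage since all the analytic work (the application of Rolle's theorem to the rational functions in~(\ref{eq:hpyrB}) and~(\ref{eq:hprismB})) has already been carried out in the proof of Theorem~\ref{thm:pp}; the present corollary is really an unpacking of that result in the polytope setting.
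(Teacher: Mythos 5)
Your proposal is correct and matches the paper's (implicit) argument: the corollary is stated as an immediate consequence of Theorem~\ref{thm:pp}(b) via the isomorphisms $\fF(\Pyr(\pP)) \cong \Pyr(\fF(\pP))$ and $\fF(\Prism(\pP)) \cong \Prism(\fF(\pP))$ recorded in Section~\ref{sec:pre}, with the Cohen--Macaulayness of $\fF(\pP)$ being standard. Your verification of the hypotheses is exactly the routine unpacking the paper leaves to the reader.
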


The near-pencil of rank $n$ on $m$ elements can be 
obtained from the rank two uniform matroid on $m-n+2$
elements by adding $n-2$ coloops. Near-pencils and 
uniform matroids are known \cite[Section~3]{NS04} to 
minimize and maximize, respectively, the entries of 
the flag $h$-vector of the lattice of flats among all 
matroids of given rank and number of elements.
The following statement confirms 
Conjecture~\ref{conj:main} for the lattices of flats 
of these matroids and completes the proof of 
Theorem~\ref{thm:mainA}. Let us denote by $\lL(\mM)$ 
the lattice of flats of a matroid $\mM$. The Eulerian 
polynomial $B_n(x)$ can be defined (see, for instance,
\cite[Sections~3.1 and~3.4]{Stei92}) as $h_{\fF(\pP)}
(x)$, where $\pP$ is the $n$-dimensional cube, or as
\[ B_n(x) \ = \ \sum_{w \in \fS^\pm_n} 
   x^{\des_B(w)}, \]
where $\fS^\pm_n$ denotes (this is nonstandard notation) 
the set of signed permutations of $[n]$, meaning 
sequences $w = (w_1, w_2,\dots,w_n)$ for which $(|w_1|, 
|w_2|,\dots,|w_n|) \in \fS_n$, and $\des_B(w)$ is the 
number of indices $i \in \{0, 1,\dots,n-1\}$ such that 
$w_i > w_{i+1}$, where $w_0 := 0$. 
\begin{proposition} \label{prop:pencil-uniform} 
Let $\mM_{m,n}$ and $\uU_{m,n}$ denote the 
near-pencil and the uniform matroid, respectively, 
of rank $n$ on $m$ elements and let $\fF(\mM_{m,n})$ 
and $\fF(\uU_{m,n})$ be the face lattices of any 
zonotopes with associated matroids $\mM_{m,n}$ and 
$\uU_{m,n}$, respectively.
\begin{itemize}
\itemsep=0pt
\item[(a)]
The polynomials $h_{\lL(\mM_{m,n})}(x)$ and 
$h_{\lL(\uU_{m,n})}(x)$ have only real roots. 
Moreover, the latter is interlaced by the 
Eulerian polynomial $A_n(x)$.

\item[(b)]
The polynomials $h_{\fF(\mM_{m,n})}(x)$ and 
$h_{\fF(\uU_{m,n})}(x)$ have only real roots. 
Moreover, the latter is interlaced by the 
Eulerian polynomial $B_{n-1}(x)$.
\end{itemize}
\end{proposition}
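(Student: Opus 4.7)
The plan is to handle near-pencils and uniform matroids separately: reducing near-pencils to a rank-two base case via iterated coloop additions, and deriving the uniform matroid cases from the main theorem of Brenti--Welker~\cite{BW08} on $h$-polynomials of barycentric subdivisions.

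For the near-pencil $\mM_{m,n}$, the decisive structural input (stated in the paragraph preceding the proposition) is that $\mM_{m,n}$ arises from $\uU_{m-n+2,2}$ by adjoining $n-2$ coloops. Each coloop adjunction replaces the lattice of flats by its pyramid and, on the side of the associated zonotope, appends an independent segment, hence replaces the zonotope's face lattice by the prism. Thus Corollary~\ref{cor:matroid} and Corollary~\ref{cor:polytope} reduce parts (a) and (b) respectively to the rank-two case. For part (a), $\overline{\lL(\uU_{k,2})}$ is a $k$-element antichain, so a direct computation yields $h_{\lL(\uU_{k,2})}(x) = 1 + (k-1)x$, which is trivially real-rooted. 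For part (b), either direct face counting or applying Theorem~\ref{thm:BER} to $\Psi_{\lL(\uU_{k,2})} = \ba + (k-1)\bb$ identifies $h_{\fF(\uU_{k,2})}(x)$ with the $h$-polynomial of the centrally symmetric $2k$-gon, a quadratic with positive discriminant.

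For the uniform matroid $\uU_{m,n}$ in part (a), the main step is to recognise that $\overline{\lL(\uU_{m,n})}$ is the face poset of the $(n-2)$-skeleton $K_{m,n}$ of the $(m-1)$-simplex, so that $h_{\lL(\uU_{m,n})}(x) = h(\sd(K_{m,n}),x)$. Since $K_{m,n}$ is shellable, its $h$-vector is nonnegative, and the main result of Brenti--Welker supplies both real-rootedness and interlacing by $A_n(x)$, the latter because their expansion of $h(\sd(\Delta),x)$ is a nonnegative combination of a standard interlacing family of polynomials whose maximum is $A_n(x)$.

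For the uniform matroid in part (b), I would exploit the genericity encoded in uniformity: a zonotope $\zZ$ with underlying matroid $\uU_{m,n}$ is a simple polytope, so its polar $\zZ^*$ is simplicial. Since $h_{\fF(\pP)}(x)$ depends only on $\fF(\pP)$ up to order reversal, $h_{\fF(\uU_{m,n})}(x)$ equals the $h$-polynomial of the barycentric subdivision of the simplicial polytope $\zZ^*$, and Brenti--Welker then delivers real-rootedness. The main obstacle is the interlacing by the type $B$ Eulerian polynomial $B_{n-1}(x)$, which does not come out of Brenti--Welker directly. My strategy here is to apply Proposition~\ref{prop:sd(Z)} to translate the claim into the real-rootedness and interlacing of the $\gamma$-polynomial $\sum_{S \subseteq [n-1]} \beta_{\lL(\uU_{m,n})}(S) \, x^{\lpeak(S)}$, and then to establish this by induction on $m$: stratifying maximal chains of $\lL(\uU_{m,n})$ by the atom that appears first in some natural order and applying Lemma~\ref{lem:int-seq}, in the spirit of the recursion used for subspace lattices in Section~\ref{sec:subspace}.
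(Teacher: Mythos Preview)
Your treatment of near-pencils (both (a) and (b)) matches the paper's: reduce to rank two via Corollaries~\ref{cor:matroid} and~\ref{cor:polytope}. Your identification of $\overline{\lL(\uU_{m,n})}$ with the face poset of the $(n-2)$-skeleton of the $(m-1)$-simplex, and the appeal to \cite{BW08} for real-rootedness in (a), also match.

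There are, however, two genuine gaps.

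\textbf{Interlacing by $A_n(x)$ in part (a).} It is not enough that $h_{\lL(\uU_{m,n})}(x)$ is a \emph{nonnegative} combination of the Brenti--Welker interlacing family $(p_{n-1,0},\dots,p_{n-1,n-1})$; Lemma~\ref{lem:int-seq}~(a) only tells you such a combination is interlaced by $p_{n-1,0}$ and interlaces $p_{n-1,n-1}$. The polynomial $A_n(x)$ is the \emph{sum} $\sum_k p_{n-1,k}(x)$, not a member of the sequence, so you need a further argument. The paper supplies one: writing $h_\lL(x)=\sum_k c_k\,p_{n-1,k}(x)$ with $c_k=h_k(\Delta_{m,n-1})$, it shows $1=c_0\le c_1\le\cdots\le c_{n-1}$ (using that $\Delta_{m,n-1}$ is the one-coskeleton of a Cohen--Macaulay complex) and then invokes a lemma of \cite{ABK20+,HOW99} saying that for monotone nonnegative weights, $\sum_k p_{n-1,k}$ interlaces $\sum_k c_k\,p_{n-1,k}$. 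Your sketch omits this monotonicity step, without which the interlacing does not follow.

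\textbf{The zonotope for $\uU_{m,n}$ is not simple.} Your route for part (b) rests on the assertion that a zonotope with underlying matroid $\uU_{m,n}$ is a simple polytope. This is false once $m>n\ge 3$: already for $\uU_{4,3}$ the zonotope is the rhombic dodecahedron, which has vertices of degree $3$ and of degree $4$. (Genericity of the vectors makes the zonotope \emph{cubical}, not simple.) So Brenti--Welker via the simplicial polar does not apply. The paper instead exploits cubicality: every proper face of such a zonotope is a combinatorial cube, and \cite[Corollary~3.5]{Ath20b+} on barycentric subdivisions of cubical complexes gives both real-rootedness of $h_{\fF(\uU_{m,n})}(x)$ and the interlacing by $B_{n-1}(x)$ in one stroke. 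Your alternative plan for the interlacing --- translating via Proposition~\ref{prop:sd(Z)} and running an induction on $m$ through Lemma~\ref{lem:int-seq} --- is not obviously workable, because the $\lpeak$ statistic does not refine along the last coordinate the way $\des$ does in Section~\ref{sec:subspace}; there is no evident recursion of the form~\eqref{eq:Anq-rec} for the left-peak generating polynomial.
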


\begin{proof}
By definition, the near-pencil $\mM_{m,n}$ is 
obtained by successively adding coloops to a 
rank two matroid. Thus, the real-rootedness of
$h_{\lL(\mM_{m,n})}(x)$ follows from 
Corollary~\ref{cor:matroid}. Similarly, since 
adding a coloop to a linear matroid $\mM$ with 
associated zonotope $\zZ$ yields a matroid whose 
associated zonotope is combinatorially isomorphic 
to the prism over $\zZ$, the real-rootedness of
$h_{\fF(\mM_{m,n})}(x)$ follows from 
Corollary~\ref{cor:polytope}.

Since the geometric lattice $\lL(\uU_{m,n})$, 
with its maximum element removed, is a simplicial 
poset with nonnegative $h$-vector, the 
real-rootedness of $h_{\lL(\uU_{m,n})}(x)$ is 
a special case of \cite[Theorem~2]{BW08}. Similarly,
since the zonotope associated to the uniform matroid
$\uU_{m,n}$ is an $n$-dimensional cubical polytope, 
the statement that $h_{\fF(\uU_{m,n})}(x)$ is 
real-rooted and interlaced by $B_{n-1}(x)$ is a 
special case of \cite[Corollary~3.5]{Ath20b+}. 
Thus, it remains to show that $h_{\lL(\uU_{m,n})}(x)$ 
is interlaced by $A_n(x)$. 

To simplify the notation,
we set $\lL := \lL(\uU_{m,n})$ and recall that 
$\overline{\lL}$ is combinatorially isomorphic to the
poset of nonempty faces of the $(n-2)$-dimensional 
skeleton, say $\Delta_{m,n-1}$, of the 
$(m-1)$-dimensional simplex $2^{[m]}$. As a 
result, $\Delta(\overline{\lL})$ is combinatorially 
isomorphic to the barycentric subdivision 
$\sd(\Delta_{m,n-1})$ and $h_\lL(x) = 
h(\sd(\Delta_{m,n-1}), x)$. As explained in 
\cite{Ath20a+} \cite[Section~1]{Ath20b+}, this 
expression implies that 
\[ h_\lL(x) \ = \ \sum_{k=0}^{n-1} c_k p_{n-1,k}(x) 
   , \]
where $h(\Delta_{m,n-1}, x) = \sum_{k=0}^{n-1} c_k 
x^k$ and $(p_{n-1,0}(x), 
p_{n-1,1}(x),\dots,p_{n-1,n-1}(x))$ is an 
interlacing sequence of real-rooted polynomials 
with nonnegative coefficients, originally defined 
in \cite{BW08}, which sum to $A_n(x)$. Since 
$\Delta_{m,n-1}$ is the one-coskeleton of the
Cohen--Macaulay simplicial complex $\Delta_{m,n}$,
as explained in the proof of 
\cite[Theorem~6.1]{AT21} we must have $1 = c_0 \le 
c_1 \le \cdots \le c_{n-1}$. An application of 
\cite[Lemma~2.2~(c)]{ABK20+} \cite[Lemma~8]{HOW99}
then shows that 
$\sum_{k=0}^{n-1} p_{n-1,k}(x) = A_n(x)$ interlaces 
$\sum_{k=0}^{n-1} c_k p_{n-1,k}(x) = h_\lL(x)$ and 
the proof follows.
\end{proof}

Computational data, along with part (b) of 
Proposition~\ref{prop:pencil-uniform}, suggest the 
following question.
\begin{question} 
Is $h_{\fF(\zZ)}(x)$ interlaced by $B_{n-1}(x)$ for 
every $n$-dimensional zonotope $\zZ$?
\label{que:zonoBn}
\end{question}

\section{An application to the second barycentric 
subdivision} 
\label{sec:bary}

As already mentioned in Section~\ref{sec:subspace}, 
the $n$th Eulerian polynomial $A_n(x)$ is equal to 
the $h$-polynomial of the first barycentric 
subdivision of the boundary complex $\partial 
\Delta_n$ of the 
$(n-1)$-dimensional simplex $\Delta_n$. As an
application of results of previous sections, we 
now give explicit combinatorial interpretations of 
the $h$-polynomial of the second barycentric 
subdivision of $\partial \Delta_n$ and of its 
associated $\gamma$-polynomial, thus answering a
question raised in \cite[Example~4.4]{Ath18}.

Let us write $\sd^2(\Delta) = \sd(\sd(\Delta))$ 
for the second barycentric subdivision of a 
simplicial complex $\Delta$. The polynomial 
$h(\sd^2(\partial \Delta_n), x)$ is an 
$x$-analogue of $(n-1)! \, n!$, which is the number 
of $(n-2)$-dimensional faces (facets) of 
$\sd^2(\Delta_n)$. For small values of $n$,

\[  h(\sd^2(\partial \Delta_n), x) \ = \ 
    \begin{cases} 
		1 + x, & \text{if $n=2$} \\
    1 + 10x + x^2, & \text{if $n=3$} \\
    1 + 71x + 71x^2 + x^3, & \text{if $n=4$} \\
    1 + 536x + 1806x^2 + 536x^3 + x^4, & 
		                         \text{if $n=5$} \\
    1 + 4677x + 38522x^2 + 38522x^3 + 4677x^4 + x^5, 
		                       & \text{if $n=6$}.
    \end{cases}  \]
		
Since $(n-1)! \, n!$ is equal to $2^{n-1}$ times 
the number of maximal chains of the partition 
lattice $\Pi_n$, and the 
latter is equal to the number of elements of the 
multiset $\aA_n$, it is not unreasonable to 
expect that the coefficients of $h(\sd^2(\partial 
\Delta_n), x)$ count signed elements of $\aA_n$
by some descent-type statistic. Indeed, let us 
denote by $\aA^\pm_n$ the multiset of all signed 
elements of $\aA_n$, meaning sequences $\tau = 
(\tau_1, \tau_2,\dots,\tau_{n-1})$ such that 
$(|\tau_1|, |\tau_2|,\dots,|\tau_{n-1}|) \in 
\aA_n$. For such $\tau \in \aA^\pm_n$, let us 
denote by $\eDes_B(\tau)$ the set of indices
$i \in \{0, 1,\dots,n-2\}$ for which
\begin{itemize}
\itemsep=0pt
\item[$\bullet$]
$\tau_i > \tau_{i+1}$, or

\item[$\bullet$]
$\tau_i = \tau_{i+1} > 0$,
\end{itemize} 
where $\tau_0 := 0$, and by $\edes_B(\tau)$ the 
cardinality of $\eDes_B(\tau)$. For example, 
the multiset $\aA^\pm_3$ consists of the twelve 
signed words $(\pm 1, \pm 1)$, $(\pm 1, \pm 1)$ 
and $(\pm 1, \pm 2)$. There is one such word 
$\tau$ with $\edes_B(\tau) = 0$, ten with 
$\edes_B(\tau) = 1$ and one with $\edes_B(\tau) 
= 2$. 

The combinatorial interpretation provided for the 
coefficients $\gamma_{n,2,i}$ in the following 
statement is analogous to the one provided by 
Petersen~\cite[Proposition~4.15]{Pet07} 
\cite[Section~13.2]{Pet15} for the coefficients
of the $\gamma$-polynomial associated to the 
Eulerian polynomial $B_n(x)$. For $\sigma \in 
\aA_n$, we denote by $\lpeak(\sigma)$ the number 
of descents $i \in [n-2]$ of $\sigma$ for which
either $i-1$ is an ascent of $\sigma$, or $i=1$.
		
\begin{proposition} \label{prop:sd^2} 
For every $n \ge 2$
\begin{eqnarray}
h(\sd^2(\partial \Delta_n), x) & = & 
\sum_{\tau \in \aA^\pm_n} x^{\edes_B(\tau)} 
\label{eq:sd^2a} \\ & = & 
\sum_{i=0}^{\lfloor (n-1)/2 \rfloor} 
\gamma_{n,2,i} \, x^i (1+x)^{n-1-2i} , 
\label{eq:sd^2b} 
\end{eqnarray}
where $\gamma_{n,2,i}$ is equal to $4^i$ times 
the number of words $\sigma \in \aA_n$ with 
$\lpeak(\sigma) = i$.
\end{proposition}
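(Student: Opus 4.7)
The plan is to pass through the $(n-1)$-dimensional permutohedron, which is the zonotope $\zZ$ whose associated matroid has lattice of flats $\Pi_n$. First I would identify $\sd^2(\partial \Delta_n)$ with $\sd(\partial \zZ)$ as simplicial complexes. The face poset of $\sd(\partial \Delta_n) = \Delta(\overline{\lL}_n)$ is the poset of nonempty chains $A_1 \subsetneq \cdots \subsetneq A_k$ of proper nonempty subsets of $[n]$, ordered by inclusion; sending each such chain to the ordered set partition $(A_1, A_2 \sm A_1, \dots, A_k \sm A_{k-1}, [n] \sm A_k)$ is an inclusion-reversing bijection onto $\overline{\fF(\zZ)}$. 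Since $\Delta(P) = \Delta(P^{op})$ for any poset $P$, this yields $\sd^2(\partial \Delta_n) \cong \sd(\partial \zZ)$, hence $h(\sd^2(\partial \Delta_n),x) = h_{\fF(\zZ)}(x)$. Applying Proposition~\ref{prop:sd(Z)} to $\zZ$ then gives
\[ h(\sd^2(\partial \Delta_n),x) \ = \sum_{S \subseteq [n-2]} \beta_{\Pi_n}(S) \, (4x)^{\lpeak(S)} (1+x)^{n-1-2\lpeak(S)}. \]
Since $\lpeak(S)$ counts the maximal blocks of consecutive integers in $S$, a quantity invariant under the reflection $i \mapsto n-1-i$, one has $\lpeak(S) = \lpeak(n-1-S)$ for every $S \subseteq [n-2]$. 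Combining this identity with the description of $\beta_{\Pi_n}(S)$ in Proposition~\ref{prop:betaPi} rewrites the sum as $\sum_{\sigma \in \aA_n} (4x)^{\lpeak(\sigma)} (1+x)^{n-1-2\lpeak(\sigma)}$, which is (\ref{eq:sd^2b}) with $\gamma_{n,2,i} = 4^i \cdot \#\{\sigma \in \aA_n : \lpeak(\sigma) = i\}$.

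For (\ref{eq:sd^2a}), I would group the signed words in $\aA^\pm_n$ by the underlying $\sigma = |\tau| \in \aA_n$ and show that the partial sum
\[ G_\sigma(x) \ := \sum_{\tau \in \aA^\pm_n : \, |\tau| = \sigma} x^{\edes_B(\tau)} \]
equals $(4x)^{\lpeak(\sigma)} (1+x)^{n-1-2\lpeak(\sigma)}$. Writing $\tau_i = \epsilon_i |\tau_i|$ with $\epsilon_i \in \{+,-\}$ and letting $d_i$ be the indicator of $i \in \eDes_B(\tau)$, a direct case analysis on the three possibilities $|\tau_i| > |\tau_{i+1}|$, $|\tau_i| < |\tau_{i+1}|$ and $|\tau_i| = |\tau_{i+1}|$ shows that $d_0 = \mathbf{1}[\epsilon_1 = -]$ and, for $i \ge 1$, $d_i = \mathbf{1}[\epsilon_i = +]$ if $i \in \Des(\sigma)$, while $d_i = \mathbf{1}[\epsilon_{i+1} = -]$ if $i \not\in \Des(\sigma)$. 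Since each indicator depends on a single sign $\epsilon_k$, the sum $G_\sigma(x)$ factors over $k \in [n-1]$. The sign $\epsilon_k$ receives two indicators exactly when $k$ is a left peak of $\sigma$, zero indicators exactly when $k-1$ ends a maximal descent run of $\sigma$, and exactly one indicator otherwise; the corresponding factors are $2x$, $2$ and $1+x$, occurring $\lpeak(\sigma)$, $\lpeak(\sigma)$ and $n-1-2\lpeak(\sigma)$ times respectively. Multiplying yields the claim for $G_\sigma(x)$, and summing over $\sigma \in \aA_n$ gives (\ref{eq:sd^2a}).

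The main obstacle is the sign-by-sign analysis in the last paragraph, which requires both the three-case computation of the indicators $d_i$ and the pairing argument identifying the number of doubly loaded signs with $\lpeak(\sigma)$. The rest of the argument is essentially a direct assembly of Propositions~\ref{prop:sd(Z)} and~\ref{prop:betaPi} via the standard simplex--permutohedron cellular duality.
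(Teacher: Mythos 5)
Your proposal is correct and follows essentially the same route as the paper: identify $\sd^2(\partial\Delta_n)$ with the barycentric subdivision of the boundary of the permutohedron, apply Propositions~\ref{prop:sd(Z)} and~\ref{prop:betaPi} (using $\lpeak(S)=\lpeak(n-1-S)$) to get~(\ref{eq:sd^2b}), and then prove~(\ref{eq:sd^2a}) by the sign-by-sign factorization over each $\sigma\in\aA_n$. You in fact spell out the two steps the paper leaves implicit or delegates to Petersen's argument (the $\lpeak$-invariance under reflection and the $2x$/$2$/$(1+x)$ factor count), and your bookkeeping there checks out.
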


\begin{proof}
The poset of faces of $\sd(\partial \Delta_n)$
is combinatorially isomorphic 
to $\fF(\hH_n)$, where $\hH_n$ is the Coxeter 
hyperplane arrangement of type $A_{n-1}$.
As a result, we have $h(\sd^2(\partial \Delta_n), 
x) = h(\sd(\zZ_n), x) = h_{\fF(\zZ_n)}(x)$, where
$\zZ_n$ is the zonotope associated to $\hH_n$
(known as the $(n-1)$-dimensional permutohedron). 
Since the geometric lattice $\lL(\hH_n)$ is 
combinatorially isomorphic to $\Pi_n$, applying 
Proposition~\ref{prop:sd(Z)} to $\zZ_n$ we get 
\[ h(\sd^2(\partial \Delta_n), x) \ =
   \sum_{S \subseteq [n-2]} \beta_{\Pi_n} (S) \, 
	 (4x)^{\lpeak(S)} (1+x)^{n - 1 - 2\lpeak(S)} . \]
Combined with Proposition~\ref{prop:betaPi}, this 
expression yields Equation~(\ref{eq:sd^2b}). To 
deduce Equation~(\ref{eq:sd^2a}) from that, it 
suffices to show that for every $\sigma \in \aA_n$
\begin{equation} \label{eq:Pet}
\sum_{\tau \in \Sigma(\sigma)} x^{\edes_B(\tau)} 
   \ = \ (4x)^{\lpeak(\sigma)} 
	 (1+x)^{n-1-2{\lpeak(\sigma)}} ,
\end{equation}
where $\Sigma(\sigma)$ 
stands for the set of all words $(\tau_1, 
\tau_2,\dots,\tau_{n-1})$ such that $(|\tau_1|, 
|\tau_2|,\dots,|\tau_{n-1}|) = \sigma$. Indeed, given
$\sigma = (\sigma_1, \sigma_2,\dots,\sigma_{n-1}) \in 
\aA_n$ and $\tau = (\tau_1, \tau_2,\dots,\tau_{n-1}) 
\in \Sigma(\sigma)$ such that $\tau_i = \varepsilon_i 
\sigma_i$ for every $i \in [n-1]$, with $\varepsilon_i 
\in \{-1,1\}$, one can verify that

\medskip
\begin{itemize}
\itemsep=0pt
\item[$\bullet$]
$0 \in \eDes_B(\tau)$ if and only if 
$\varepsilon_1 = -1$,

\item[$\bullet$]
for $\sigma_i < \sigma_{i+1}$, we have 
$i \in \eDes_B(\tau)$ if and only if 
$\varepsilon_{i+1} = -1$,

\item[$\bullet$]
for $\sigma_i \ge \sigma_{i+1}$, we have 
$i \in \eDes_B(\tau)$ if and only if 
$\varepsilon_i = 1$.
\end{itemize} 

\medskip
\noindent
Then, the argument given for permutations $\sigma \in 
\fS_n$ in \cite[Section~13.2]{Pet15} applies verbatim
to our situation and proves~(\ref{eq:Pet}); the 
details are left to the interested reader.
\end{proof}




\medskip
\noindent \textbf{Acknowledgments}. This work was 
supported by the Hellenic Foundation for Research and 
Innovation (H.F.R.I.) under the `2nd Call for H.F.R.I. 
Research Projects to support Faculty Members \& 
Researchers'. The authors thank Eleni Tzanaki 
for her generous help with all computations (via Sage 
\cite{Sage} and the database of matroids, publicly 
available at 
{\tt 
https://www-imai.is.s.u-tokyo.ac.jp/~ymatsu/matroid/index.html}) 
mentioned in this paper and Luis Feronni for useful 
comments.

\end{document}